\newtheorem{theorem}{Theorem}%[\arabic{section}]
\newtheorem{corollary}[theorem]{Corollary}
\newtheorem{problem}[theorem]{Problem}
\newtheorem{proposition}[theorem]{Proposition}
\DeclareMathOperator*{\col}{col}
\def\tkZ{\{t_k\}_{k=0}^\infty}
\DeclareMathOperator{\He}{Sym}
\DeclareMathOperator*{\diag}{diag}
\DeclareMathOperator{\eps}{\varepsilon}
\DeclareMathOperator*{\argmin}{arg\ min}
\newenvironment{proof}{{\it Proof :~}}{\hfill$\diamondsuit$\\}
\providecommand{\bblue}[1]{\color{black}{#1}\color{black}\hspace{0pt}}
\providecommand{\blue}[1]{\color{black}{#1}\color{black}\hspace{0pt}}
\begin{document}

%\setstretch{0.9}

\title{Co-design of aperiodic sampled-data min-jumping rules for linear impulsive, switched impulsive and sampled-data systems}

\author{Corentin Briat\thanks{corentin@briat.info, http://www.briat.info}}
%\author[First]{ and Laura Paola Prochazka}\ead{laura.prochazka@bsse.ethz.ch}%\ead[url]{http://www.gipsa-lab.fr/\textasciitilde alexandre.seuret}
\date{}

\maketitle`

\begin{abstract}
\blue{Co-design conditions for the design of a jumping-rule and a sampled-data control law for impulsive and impulsive switched systems subject to aperiodic sampled-data measurements are provided. Semi-infinite discrete-time Lyapunov-Metzler conditions are first obtained. As these conditions are difficult to check and generalize to more complex systems, an equivalent formulation is provided in terms of clock-dependent (infinite-dimensional) matrix inequalities. These conditions are then, in turn, approximated by a finite-dimensional optimization problem using a sum of squares based relaxation. It is proven that the sum of squares relaxation is non conservative provided that the degree of the polynomials is sufficiently large. It is emphasized that acceptable results are obtained for low polynomial degrees in the considered examples.}\\

\noindent\textit{Keywords.} Hybrid systems; sampled-data control; switching/jumping rules; clock-dependent conditions
\end{abstract}

\section{Introduction}

\blue{The co-design problem in control theory amounts to simultaneously determining two components of the considered control system. Important examples are, for instance, the co-design of a sampling-rule together with a corresponding controller \cite{Anta:10,Seuret:11b}, the co-design of a continuous control law and a switching rule for a switched system \cite{Liberzon:03,Allerhand:13}, the co-design of communication protocols and controllers in networked control systems \cite{Hristu:99,Peters:15} or the co-design of multiple continuous/sampled-data controllers along with a switching rule in order to stabilize a given system \cite{Morse:96,Prieur:11}.

In the context of linear switched systems, a popular switching rule is the so-called "min-switching rule" \cite{Geromel:06b,Geromel:06d,Allerhand:13}. In this setup, a quadratic Lyapunov function is consider for each mode and the mode is chosen in real-time in a way that makes the value of the Lyapunov function minimum. It can be shown that this corresponds to finding a certain partitioning of the state-space. An alternative approach \cite{Kruszewski:11} is based on the use of a common quadratic Lyapunov function for all the subsystems and the switching rule is chosen such that the derivative of the Lyapunov function is minimum at any time. Although slightly different, these approaches share the underlying assumption that the state/output of the system is continuously measured, which may be impractical from an implementation point of view or may lead to undesired chattering. A potential workaround to these issues would be the consideration of sampled measurements instead of continuous-time ones. Establishing event-triggered or self-triggered sampling strategies are possible solutions but have been almost exclusively considered in the sampled-data control of continuous-time systems. The co-design problem adds a layer of difficulty since one has to, on the top of the switching-/sampling-rule, simultaneously design a controller. In the context of switched systems, that was recently solved in \cite{Allerhand:13} using timer-/clock-dependent Lyapunov functions which leads to infinite-dimensional stability and stabilization conditions that are affine in the matrices of the system and, hence, amenable to be solved using polynomial methods and semidefinite programming techniques \cite{Parrilo:00,sostools3,Putinar:93}. Analogous conditions for impulsive and switched systems were also proposed in \cite{Briat:11l,Briat:12h,Briat:13b} using looped-functionals and in \cite{Briat:13d,Briat:14f,Briat:15i} using clock-dependent Lyapunov functions.

Most of the results on the design of switching rules pertain on the assumption of continuous measurements of the state/output. The case of sampled periodic/aperiodic measurements has been relatively few studied in the literature in spite of its importance. \bblue{For instance, a sampled-data switching rule is designed for stabilizing switched affine systems is considered in \cite{Hetel:13}. The approach is based on minimizing the derivative of the Lyapunov function at each sampling instants, which is another an alternative approach to the min-switching rule. Perhaps more closely, a periodic sampled-data switching rule for switched systems is developed in \cite{Deaecto:13b} using a min-switching approach. However, the approach does not consider aperiodic sampling, the presence of state jumps and the design of a sampled-data control law.} Since time-varying delays, jitter and packet loss induced by communication channels in communication networks destroy the periodicity of the arrival times of the sampled measurements, making the sampling scheme apparently aperiodic (see e.g. \cite{Briat:11d,Yuan:16}), it seemed important to incorporate this phenomenon in the current setup. Several methods have been proposed to deal with aperiodic sampling. One is based on the reformulation of a sampled-data system as an input-delay system that can be analyzed using Lyapunov-Krasovskii functionals; see e.g. \cite{Fridman:04}. A second approach is a robust one where the sampling operator is considered as an uncertainty and the interconnection is analyzed using robust analysis techniques \cite{Mirkin:07,Fujioka:09b,Kao:14}. Direct discrete-time approaches have also been considered in \cite{Fujioka:09a,Oishi:10}. Finally, the hybrid formulation \cite{Goebel:09} of a sampled-data system has been considered in various works and analyzed using Lyapunov functionals \cite{Naghshtabrizi:08}, Looped-functionals \cite{Seuret:12,Briat:11l,Briat:14b} and clock-/timer- dependent Lyapunov functions \cite{Goebel:09,Briat:13d,Briat:15i,Briat:16c}.

The objective of this paper is to address an analogous problem for linear impulsive systems, a general class of hybrid systems that encompasses switched and sampled-data systems as particular instances \cite{Goebel:12,Briat:13d}. We consider here a set-up where both a \emph{min-jumping rule} (an analogue to the min-switching rule considered in the context of switched systems) and a continuous-time.sampled-data state-feedback controller in the context of unpredictable sampling instants. This has to be contrasted with the setup where the future sampling instant is known (periodic sampling or event-/self- triggered sampling strategies) and the setup where measurements are continuous. Such a scenario occurs, for instance, in networked control systems where actuation decisions are made upon reception of measurements from sensors, which may happen aperiodically due to the presence of sampling, jitter, delays and packet loss \cite{Hespanha:07,Zhang:16c}. The advantage of such an approach is twofold: it rules out the chattering phenomenon of the continuous-time methods and is realistic from an implementation viewpoint since no continuous measurement is assumed. It is assumed here that the measurements from sensors arrive at discrete time instants which are assumed to satisfy a mild range dwell-time condition \cite{Briat:11l,Briat:13d}. Hence, both periodic and aperiodic measurements are considered and can be easily defined in a way to incorporate jitter, small delays and self/event-triggered sampling mechanisms \cite{Yuan:16}. The proposed approach is then applied to impulsive systems and switched-impulsive systems which can be used to model networked control systems subject to delays and communication outages, systems controlled by multiple controllers or systems with limited actuation resources \cite{Hespanha:07}. To the author's knowledge, this is the first time that conditions for the co-design of asynchronous switching/jumping rules and a sampled-data state-feedback controller are obtained for impulsive and switched impulsive systems.

Sufficient stabilization conditions for the co-design of min-jumping rule and, possibly, a sampled-data controller are first stated as discrete-time Lyapunov-Metzler conditions \cite{Geromel:06d,Heemels:17}. These conditions are extended to the co-design of min-jumping/switching rule and sampled-data controllers for impulsive switched systems. The conditions are stated as robust linear matrix inequalities with a scalar uncertainty at the exponential, which makes them difficult to check; \cite{Briat:11l,Briat:12h} for a similar discussion. These conditions are also difficult to consider when additional uncertainties are involved in the system expression, when the system is not time-invariant or even nonlinear. Looped-functionals \cite{Seuret:12,Briat:11l,Briat:12h,Briat:13b} and clock-dependent Lyapunov functions \cite{Allerhand:11,Briat:13d,Briat:14f,Briat:15f,Briat:15i,Xiang:15a,Xiang:16} were introduced as a workaround to equivalently represent a discrete-time stability condition into a form that is affine in the matrices of the system. The price to pay is the infinite dimensionality of the affine conditions meaning that they cannot be directly checked. However, finite-dimensional approximations can be obtained by relying on  sum of squares (SOS) \cite{Parrilo:00,Briat:13d,Briat:14f,Briat:15f}, Handelman's Theorem \cite{Handelman:88,Briat:16c,Scherer:06,Kamyar:15,Le:18} or discretization methods \cite{Allerhand:11}. In this regard, the infinite-dimensional conditions have the advantage of being readily applicable for design purposes, robustness/performance analysis; see e.g.  \cite{Allerhand:11,Briat:11l,Seuret:12,Briat:12h,Briat:13b,Briat:13d,Briat:14f,Briat:15f,Briat:15i,Xiang:15a,Xiang:16}. In this paper, SOS relaxations are considered as they often lead to more efficient finite-dimensional conditions than the others \cite{Briat:15f}. For all the obtained co-design conditions, converse results are obtained meaning that if the original Lyapunov-Metzler conditions are feasible, then there exists a solution to the SOS program. In this regard, considering polynomial can be seen as non restrictive in the present cases. Several examples illustrate the efficiency of the proposed conditions.}

%\noindent\textbf{Outline.} Results for impulsive systems are given in Section \ref{sec:imp} and are extended to impulsive switched systems in Section \ref{sec:switched}. Examples are given in Section \ref{sec:ex}.

\noindent\textbf{Notations.} The set of symmetric matrices of dimension $n$ is denoted by $\mathbb{S}^n$ and for $A,B\in\mathbb{S}^n$, $A\preceq B$ means that $A-B$ is negative semidefinite. The cone of symmetric positive (semi)definite matrices of dimension $n$ is denoted by ($\mathbb{S}^n_{\succeq0}$) $\mathbb{S}^n_{\succ0}$. The $n$-dimensional vector of ones is denoted by $\mathds{1}_n$.% The standard basis for $\mathbb{R}^n$ is denoted by $\{e_i\}_{i=1,\ldots,n}$. The Kronecker product of two matrices $A$ and $B$ is denoted by $A\otimes B$.

\bblue{\section{Problem formulation and motivations}\label{sec:general}

We consider in this paper the following general class of switched impulsive systems with multiple jump maps and sampled-data state-feedback:
\begin{equation}\label{eq:mainsystg}
\begin{array}{rcl}
  \dot{x}(t)&=&A_{\sigma(t)}x(t)+B_{\sigma(t)}u(t),\ t\in \mathbb{R}_{\ge0}\backslash\tkZ\\
  u(t)&=&K_{\sigma(t_k^+),\sigma(t_k)}^1x(t_k)+K_{\sigma(t_k^+),\sigma(t_k)}^2u(t_k),\ t\in(t_k,t_{k+1}]\\
  x(t_k^+)&=&J_{\sigma(t_k^+),\sigma(t_k)}x(t_k),\ k\in\mathbb{Z}_{\ge0}
\end{array}
\end{equation}
where $x,x_0\in\mathbb{R}^n$ and $u\in\mathbb{R}^{m}$ are the state of the system, the initial condition and the control input, respectively. The notation $x(t_k^+)$ is defined as $\textstyle x(t_k^+):=\lim_{s\downarrow t_k}x(s)$ and we have that $\textstyle x(t_k)=\lim_{s\uparrow t_k}x(s)$, i.e. the trajectories are left-continuous. The signal $\sigma(t)\in\{1,\ldots,N\}$ is assumed to be piecewise constant and to only change value on $\tkZ$ where the sequence $\tkZ$ satisfies the range dwell-time condition  $T_k:=t_{k+1}-t_k\in[T_{min},T_{max}]$, $0<T_{min}\le T_{max}<\infty$, for all $k\in\mathbb{Z}_{\ge0}$. It is assumed throughout this paper that this sequence is not known a priori and is decided, for example, at the sensor level through some event-based algorithm that is not known from the controller and scheduling side. Interestingly, this may occur in the case of periodic sampling which is subject to a sensor-to-controller time-varying delay or data loss; see e.g. \cite{Briat:11d,Yuan:16}. Note, however, that at any time $t$, the sequence $\{t_k\}_{k\in \kappa(t)}$ with $\kappa(t):=\left\{k\in\mathbb{Z}_{\ge0}:t_k\le t\right\}$ is known.

In this paper, we will address the following general co-design problem for the system \eqref{eq:mainsystg}:
\begin{problem}
  Assuming aperiodic sampled-data measurements $x(t_k)$ satisfying a range dwell-time condition, solve the co-design problem for the system \eqref{eq:mainsystg}, that is the goal is to design
  \begin{enumerate}[(a)]
    \item a sampled-data state-dependent sampled-data switching law $\sigma$, and
    \item a mode-dependent sampled-data control law $u$
  \end{enumerate}
  such that the closed-loop system is asymptotically stable under the same range dwell-time constraint.
\end{problem}
Co-design problems are, in general, not easy as they can be often non-convex and, therefore, difficult to solve, unless in some particular cases. In the present paper, we aim at proposing solutions which can be checked using convex programming techniques.

To clarify the ideas, we briefly give now several interesting examples covered by the aforementioned general setup. The first one is the \emph{sampled-data controller co-design for LTI systems}. This problem is captured by the system
\begin{equation}
\begin{array}{rcl}
  \dot{x}(t)&=&Ax(t)+Bu(t),\ t\in \mathbb{R}_{\ge0}\backslash\tkZ\\
  u(t)&=&K_{\sigma(t_k^+),\sigma(t_k)}^1x(t_k)+K_{\sigma(t_k^+),\sigma(t_k)}^2u(t_k),\ t\in(t_k,t_{k+1}]
\end{array}
\end{equation}
and the aim is to design $N$ sampled-data controllers (i.e. the gains $K_{i,j}^1$, $K_{i,j}^2$, $i,j=1,\ldots,N$) together with a selecting rule $\sigma$ deciding on what controllers to use upon sampled-measurements arrivals.

The second example concerns the control of jump systems
\begin{equation}
\begin{array}{rcl}
  \dot{x}(t)&=&A(t)x(t),\ t\in \mathbb{R}_{\ge0}\backslash\tkZ\\
  x(t_k^+)&=&J_{\sigma(t_k)}x(t_k),\ k\in\mathbb{Z}_{\ge0}
\end{array}
\end{equation}
where the aim is to select which jump matrix to use upon sampled-measurements arrivals. Note that there is no co-design problem here unless some jump matrices can be partially or fully designed. For instance, if $J_i=:J_i^0+J_i^1X_i$ for some $i$, where the $X_i$'s are matrices to be designed and the others are fixed and known. In such a case, the problem becomes a co-design problem which can be solved using convex optimization techniques. Note that this may not be the case when the matrices $J_i$ have a different structure.

Finally, the third example pertains on the co-design of sampled-data controllers along with a switching rule based on sampled-data measurements for switched systems. This problem is well captured by the following model
\begin{equation}
\begin{array}{rcl}
  \dot{x}(t)&=&A_{\sigma(t)}x(t)+B_{\sigma(t)}u(t),\ t\in \mathbb{R}_{\ge0}\backslash\tkZ\\
  u(t)&=&K_{\sigma(t_k^+),\sigma(t_k)}^1x(t_k)+K_{\sigma(t_k^+),\sigma(t_k)}^2u(t_k),\ t\in(t_k,t_{k+1}]\\
\end{array}
\end{equation}
where the goal is to design $N$ sampled-data controllers (i.e. the gains $K_{i,j}^1$, $K_{i,j}^2$, $i,j=1,\ldots,N$) together with a switching rule $\sigma$ for the switched system as well for selecting the right sampled-data controller.}

\section{Results for sampled-data impulsive systems}\label{sec:imp}

\bblue{We start with the results on impulsive systems. We first give some preliminaries, followed by the main theoretical results. Due to the structure of the conditions, a section pertaining on solving those conditions is provided.}

\subsection{Preliminaries}

Let us consider in this section the following impulsive system with multiple jump maps and sampled-data state-feedback:
\begin{equation}\label{eq:mainsyst_imp}
\begin{array}{rcl}
  \dot{x}(t)&=&Ax(t)+Bu(t), t\in \mathbb{R}_{\ge0}\backslash\tkZ\\
  u(t)&=&K_{\sigma(t_k^+)}^1x(t_k)+K_{\sigma(t_k^+)}^2u(t_{k}),\ t\in(t_k,t_{k+1}]\\
  x(t_k^+)&=&J_{\sigma(t_k^+)}x(t_k),\ k\in\mathbb{Z}_{\ge0}
\end{array}
\end{equation}
where $x(\cdot),x_0\in\mathbb{R}^n$, $u(\cdot)\in\mathbb{R}^m$ are the state of the system, the initial condition and the control input. The signal $\sigma(t)\in\{1,\ldots,N\}$ as well as the sequence $\tkZ$ satisfy the conditions stated in Section \ref{sec:general}. It is assumed throughout this paper that this sequence is not known a priori and is decided, for example, at the sensor level through some event-based algorithm that is not known from the controller and scheduling side. Interestingly, this may occur in the case of periodic sampling which is subject to a sensor-to-controller time-varying delay or data loss; see e.g. \cite{Briat:11d,Yuan:16}.
%
%A slightly more general version of the above system that will be encountered in this paper is the following
%\begin{equation}\label{eq:mainsyst_imp}
%\begin{array}{rcl}
%  \dot{x}(t_k+\tau)&=&A(\tau)x(t_k+\tau)+B(\tau)u(t_k+\tau), \tau\in (0,T_k],k\mathbb{Z}_{\ge0}\\
%  u(t_k+\tau)&=&K_{\sigma(t_k^+)}^1x(t_k)+K_{\sigma(t_k^+)}^2u(t_k),\ \tau\in (0,T_k],k\mathbb{Z}_{\ge0}\\
%  x(t_k^+)&=&J_{\sigma(t_k^+)}x(t_k),\ k\in\mathbb{Z}_{\ge0}
%\end{array}
%\end{equation}
%where we have rewritten the time variable $t$ as the sum of the most recent impulse time $t_k$ and a timer variable $\tau$. In the process, the matrices of the system $A$ and $B$ are now timer-dependent matrices. The rationale for this extension is that when controlling

The objective of this section is to obtain co-design conditions for the simultaneous design of the state-feedback control gains $K_i^1\in\mathbb{R}^{m\times n},K_i^2\in\mathbb{R}^{m\times m}$, $i=1,\ldots,N$ and the jump scheduling law $\sigma(t)$ that is actuated only at the times in $\tkZ$ using only current state-information. In order to solve this problem, we first rewrite the impulsive sampled-data system \eqref{eq:mainsyst_imp} into an impulsive system with augmented state-space
\begin{equation}
  \begin{array}{rcl}
  \dot{\chi}(t)&=&\underbrace{\begin{bmatrix}
      A & B\\
      0 & 0
    \end{bmatrix}}_{\mbox{$\bar A$}}\chi(t),\ t\in \mathbb{R}_{\ge0}\backslash\tkZ\\
    \chi(t_k^+)&=&\underbrace{\begin{bmatrix}
      J_{\sigma(t_k^+)} & 0\\
      K^1_{\sigma(t_k^+)} & K^2_{\sigma(t_k^+)}
    \end{bmatrix}}_{\mbox{$\bar{J}_{\sigma(t_k^+)}$}}\chi(t_k),\ k\in\mathbb{Z}_{\ge0}
  \end{array}
\end{equation}
where $\chi(t):=\col(x(t),u(t))$, $\bar{J}_{i}=:\bar{J}^0_{i}+\bar{J}^1_{i}K_{i}$, $K_i:=[K_i^1\ \ K_i^2]$, $i=1,\ldots,N$, and
for which we propose the following \emph{min-jumping rule}
\begin{equation}\label{eq:rule1}
  \sigma(t_k^+)=\argmin_{i\in\{1,\ldots,N\}}\left\{\chi(t_k)^TP_i\chi(t_k)\right\}
\end{equation}
where the matrices $P_i\in\mathbb{S}^{n+m}_{\succ0}$ have to be designed. This rule is clearly inspired by the continuous min-switching law considered in \cite{Geromel:06b} in the context of switched systems.

The following result states a sufficient condition for the stability of the system \eqref{eq:mainsyst_imp} controlled with the min-jumping rule \eqref{eq:rule1}:
\begin{proposition}\label{prop:1}
  Let $0<T_{min}\le T_{max}<\infty$ and assume that there exist matrices $P_i\in\mathbb{S}^{n+m}_{\succ0}$, $i=1,\ldots,N$, a nonnegative matrix\footnote{A nonnegative matrix is a matrix containing nonnegative entries.} $\Pi\in\mathbb{R}^{N\times N}$ verifying $\mathds{1}_N^T\Pi=\mathds{1}_N^T$ and a scalar $\eps>0$ such that the condition
\begin{equation}\label{eq:0}
  \bar J_i^Te^{\bar A^T\theta}\left(\sum_{j=1}^N\pi_{ji}P_j\right)e^{\bar A\theta}\bar J_i-P_i +\eps I\preceq0
\end{equation}
holds for all $i=1,\ldots,N$ and all $\theta\in[T_{min},T_{max}]$.

Then, the system \eqref{eq:mainsyst_imp} controlled with the rule \eqref{eq:rule1} is asymptotically stable for any sequence $\tkZ$ satisfying the range dwell-time condition $t_{k+1}-t_k\in[T_{min},T_{max}]$.
\end{proposition}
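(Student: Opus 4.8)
The plan is to study the system only at the sampling instants $\tkZ$ and to take $V(\chi)=\min_{i\in\{1,\ldots,N\}}\chi^TP_i\chi$ as a Lyapunov function for the induced discrete-time dynamics. First I would integrate the flow over one sampling interval: since $\sigma$ is constant on $(t_k,t_{k+1}]$ and equal to $i_k:=\sigma(t_k^+)$, the solution satisfies $\chi(t_{k+1})=e^{\bar A T_k}\bar J_{i_k}\chi(t_k)$ with $T_k=t_{k+1}-t_k\in[T_{min},T_{max}]$. Writing $\chi_k:=\chi(t_k)$, the problem reduces to the discrete-time switched system $\chi_{k+1}=e^{\bar A T_k}\bar J_{i_k}\chi_k$, where the rule \eqref{eq:rule1} selects $i_k=\argmin_i\chi_k^TP_i\chi_k$, so that $V(\chi_k)=\chi_k^TP_{i_k}\chi_k$.

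The crucial step is to bound $V(\chi_{k+1})$. Because the minimum over $i$ never exceeds any convex combination of the numbers $\chi_{k+1}^TP_j\chi_{k+1}$, and because $\mathds{1}_N^T\Pi=\mathds{1}_N^T$ forces the $i_k$-th column of $\Pi$ to consist of nonnegative weights summing to one, I would write $V(\chi_{k+1})\le\sum_{j}\pi_{ji_k}\chi_{k+1}^TP_j\chi_{k+1}$. Substituting the flow relation gives $V(\chi_{k+1})\le\chi_k^T\bar J_{i_k}^Te^{\bar A^T T_k}\big(\sum_j\pi_{ji_k}P_j\big)e^{\bar A T_k}\bar J_{i_k}\chi_k$, and the hypothesis \eqref{eq:0} evaluated at $i=i_k$, $\theta=T_k$ yields $\chi_k^T\bar J_{i_k}^Te^{\bar A^T T_k}\big(\sum_j\pi_{ji_k}P_j\big)e^{\bar A T_k}\bar J_{i_k}\chi_k<\chi_k^TP_{i_k}\chi_k=V(\chi_k)$ for $\chi_k\ne0$, hence strict decrease of $V$ across every interval, uniformly over the admissible sequences $\tkZ$. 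This convexity argument is the step I expect to be the main obstacle: one must resist trying to match the minimizing index at step $k+1$ and instead upper-bound the min by the single convex combination whose weights are read off from column $i_k$ of $\Pi$, which is precisely where the stochasticity constraint $\mathds{1}_N^T\Pi=\mathds{1}_N^T$ enters and makes \eqref{eq:0} the right condition.

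To turn pointwise strict decrease into a genuine contraction I would invoke compactness. Setting $M_i(\theta):=\bar J_i^Te^{\bar A^T\theta}\big(\sum_j\pi_{ji}P_j\big)e^{\bar A\theta}\bar J_i$, each map $\theta\mapsto M_i(\theta)$ is continuous and satisfies the strict inequality \eqref{eq:0} on the compact set $[T_{min},T_{max}]$; since the index set is finite, $\rho:=\max_{i}\max_{\theta\in[T_{min},T_{max}]}\lambda_{\max}\big(P_i^{-1/2}M_i(\theta)P_i^{-1/2}\big)$ satisfies $\rho\in[0,1)$ and $M_i(\theta)\preceq\rho P_i$ for all admissible $i,\theta$. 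Consequently $V(\chi_{k+1})\le\rho V(\chi_k)$, so $V(\chi_k)\le\rho^kV(\chi_0)\to0$. As $\lambda_{\min}(P_i)\|\chi\|^2\le V(\chi)\le\lambda_{\max}(P_i)\|\chi\|^2$ makes $V$ positive definite and radially unbounded, this forces $\chi_k\to0$.

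Finally I would recover continuous-time asymptotic stability from the sampled behaviour: on each interval $\chi(t)=e^{\bar A(t-t_k)}\bar J_{i_k}\chi_k$ with $t-t_k\le T_{max}$, so $\|\chi(t)\|\le\big(\sup_{s\in[0,T_{max}]}\|e^{\bar As}\|\big)\big(\max_i\|\bar J_i\|\big)\|\chi_k\|$. The inter-sample trajectory is therefore dominated by a fixed constant times the vanishing and nonincreasing-in-$V$ sequence $\|\chi_k\|$, which delivers both Lyapunov stability of the origin and convergence $\chi(t)\to0$, completing the argument.
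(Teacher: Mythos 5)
Your proof is correct and takes essentially the same route as the paper: the paper's own proof reduces the closed-loop dynamics to the discrete-time switched system $\chi(t_{k+1})=e^{\bar AT_k}\bar J_{\sigma(t_k^+)}\chi(t_k)$ and then simply cites the discrete-time Lyapunov--Metzler min-switching result of Geromel and Colaneri, whose proof is precisely your convexity argument with the column-stochastic weights of $\Pi$ (the paper reproduces this argument in full when proving Proposition \ref{prop:2}). If anything, your write-up is more self-contained on two points the paper glosses over: the compactness argument yielding a uniform contraction factor over the continuum $\theta\in[T_{min},T_{max}]$ (the cited discrete-time result concerns a finite family of matrices), and the explicit inter-sample bound showing that convergence of the sampled sequence implies asymptotic stability of the continuous-time impulsive system.
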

\begin{proof}
  The proof of this result is based on the consideration of the linear discrete-time switched system
\begin{equation}
  \chi(t_{k+1})=e^{\bar AT_k}\bar{J}_{\sigma(t_k^+)}\chi(t_k)
\end{equation}
whose stability is equivalent to the original impulsive system. Using now the result in \cite{Geromel:06d} for discrete-time switched systems yields the result.
\end{proof}

The condition \eqref{eq:0} is not an LMI because of the products between $P_j$ and $\pi_{ji}$ but becomes so whenever the $\pi_{ji}$'s are chosen a priori. An approach based on stationary distributions of Markov processes is proposed in \cite{Geromel:06b,Geromel:06d}. Another one relies on the simplification of the matrix $\Pi$ (i.e. Lemma 1 in \cite{Geromel:06d}) or on an equivalent simpler condition \cite{Deaecto:13b}. However, these approaches do not directly apply to the current problem because of the uncertain parameter $\theta$ and the presence of the controller gains that need to be designed. It is unclear at this time whether there exists a similar way to solve this problem and a brute force approach may be necessary to appropriately select this matrix. \bblue{A possible solution would be the consideration of a $\theta$-dependent matrix $\Pi$. This is left for future research.}

\blue{It is also worth mentioning that the approach based on Lyapunov-Metzler inequalities has been favored here as it is computationally more appealing than an approach based on the $S$-procedure \cite{Heemels:17} which is known to be less conservative but involves more decision variables (i.e. $2N^2(N-1)$ instead of $N(N-1)$ for the current one) and more inequality constraints. In this regard, the practical advantage of the approach based on the $S$-procedure is unclear. In spite of that, the conditions in the above theorem can be easily be extended to this more general setting without any difficulty.}

Finally, when the state-feedback gains $K_i$ are to be computed, then the condition \eqref{eq:0} is not appropriate since it cannot be easily turned into a form that is convenient for design purposes. At last, the presence of the uncertain parameter $\theta$ at the exponential adds some computational complexity to the overall approach. However, this latter problem has now been extensively studied; see e.g. \cite{Heemels:10b,Allerhand:11,Briat:11l,Seuret:12,Briat:13d} and subsequent works of the same authors.

\subsection{Main results}

Inspired by the results in \cite{Briat:13d,Briat:14f}, the following co-design result is obtained:
\begin{theorem}\label{th:main_IMP}
Let $0<T_{min}\le T_{max}<\infty$. Then, the following statements are equivalent:
\begin{enumerate}[(a)]
  \item There exist matrices $P_i\in\mathbb{S}^{n+m}_{\succ0}$, $K_i\in\mathbb{R}^{m\times(n+m)}$, $i=1,\ldots,N$, and a nonnegative matrix $\Pi\in\mathbb{R}^{N\times N}$ verifying $\mathds{1}_N^T\Pi=\mathds{1}_N^T$ such that the condition \eqref{eq:0}
%\begin{equation}\label{eq:0}
%  \bar J_i^Te^{\bar A^T\theta}\left(\sum_{j=1}^N\pi_{ji}P_j\right)e^{\bar A\theta}\bar J_i-P_i\prec0
%\end{equation}
holds for all $i=1,\ldots,N$ and all $\theta\in[T_{min},T_{max}]$.
\item There exist differentiable matrix-valued functions $S_i:[0,T_{max}]\mapsto\mathbb{S}^{n+m}$, matrices $P_i\in\mathbb{S}_{\succ0}^{n+m}$, $K_i\in\mathbb{R}^{m\times(n+m)}$, $i=1,\ldots,N$, a nonnegative matrix $\Pi\in\mathbb{R}^{N\times N}$ verifying $\mathds{1}_N^T\Pi=\mathds{1}_N^T$  and a scalar $\eps>0$ such that the conditions
\begin{equation}\label{eq:1}
  -\dot{S}_i(\tau)+\bar A^TS_i(\tau)+S_i(\tau)\bar A\preceq0
\end{equation}
\begin{equation}\label{eq:2}
  -P_i+\bar J_i^TS_i(\theta)\bar J_i+\eps I\preceq0
\end{equation}
and
\begin{equation}\label{eq:3}
  \sum_{j=1}^N\pi_{ji}P_j-S_i(0)\preceq0
\end{equation}
hold for all $i=1,\ldots,N$, all $\tau\in[0,T_{max}]$ and all $\theta\in[T_{min},T_{max}]$.
\item There exist some differentiable matrix-valued functions $\tilde S_i:[0,T_{max}]\mapsto\mathbb{S}^{n+m}$, matrices $\tilde P_i\in\mathbb{S}_{\succ0}^{n+m}$, $U_i\in\mathbb{R}^{m\times (n+m)}$, $i=1,\ldots,N$, a nonnegative matrix $\Pi\in\mathbb{R}^{N\times N}$ verifying $\mathds{1}_N^T\Pi=\mathds{1}_N^T$  and a scalar $\eps>0$ such that the conditions
\begin{equation}
  \dot{\tilde S}_i(\tau)+\tilde S_i(\tau)\bar A^T+\bar A\tilde S_i(\tau)\preceq0
\end{equation}
\begin{equation}
\begin{bmatrix}
  -\tilde P_i & \star\\
  \bar J_i^0\tilde P_i+\bar J_i^1U_i & -\tilde S_i(\theta)
\end{bmatrix}\prec0
\end{equation}
and
\begin{equation}\label{eq:djsqsl}
-\diag_{j=1}^N\{\tilde P_j\}+V_i\tilde S_i(0)V_i^T\preceq0
\end{equation}
where $V_i=\col_{j=1}^N\{\pi_{ji}^{1/2}I_{n+m}\}$ hold for all $i=1,\ldots,N$, all $\tau\in[0,T_{max}]$ and all $\theta\in[T_{min},T_{max}]$. %\red{Moreover, when this is the case the conditions of statement (c) hold, then the system \eqref{eq:mainsyst_imp} with the controller gains $K_i=U_i\tilde{P}_i^{-1}$, $i=1,\ldots,N$, and the rule \eqref{eq:rule1} with $P_i=\tilde{P}_i^{-1}$ is asymptotically stable for any sequence \tkZ satisfying the range dwell-time condition $t_{k+1}-t_k\in[T_{min},T_{max}]$.}
\end{enumerate}
Moreover, when the conditions of statement (c) hold, then the conditions in Proposition \ref{prop:1} hold with $K_i=U_i\tilde{P}_i^{-1}$ and  $P_i=\tilde{P}_i^{-1}$, $i=1,\ldots,N$. As a result,  the system \eqref{eq:mainsyst_imp} with the controller gains $K_i=U_i\tilde{P}_i^{-1}$, $i=1,\ldots,N$, and the rule \eqref{eq:rule1} with $P_i=\tilde{P}_i^{-1}$ is asymptotically stable for any sequence $\tkZ$ satisfying the range dwell-time condition $t_{k+1}-t_k\in[T_{min},T_{max}]$.
\end{theorem}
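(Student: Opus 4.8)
The plan is to prove the two equivalences $(a)\Leftrightarrow(b)$ and $(b)\Leftrightarrow(c)$ separately, and then read off the ``moreover'' gain-recovery claim directly from the change of variables used in $(b)\Leftrightarrow(c)$ together with Proposition~\ref{prop:1}. For the direction $(a)\Rightarrow(b)$ I would simply exhibit an explicit clock-dependent certificate: given $P_i$ and $\Pi$ satisfying \eqref{eq:0}, set $S_i(\tau):=e^{\bar A^T\tau}\big(\sum_{j=1}^N\pi_{ji}P_j\big)e^{\bar A\tau}$ on $[0,T_{max}]$. Differentiating gives $\dot S_i=\bar A^TS_i+S_i\bar A$, so \eqref{eq:1} holds with equality, and $S_i(0)=\sum_j\pi_{ji}P_j$ makes \eqref{eq:3} hold with equality. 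Substituting into \eqref{eq:2} reproduces the left-hand side of \eqref{eq:0} shifted by $\eps I$; since that left-hand side is continuous in $\theta$ and negative definite on the compact interval $[T_{min},T_{max}]$, it is uniformly bounded above by $-\delta I$ for some $\delta>0$, and any $\eps\in(0,\delta]$ closes the argument.

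For $(b)\Rightarrow(a)$ the key tool is a comparison (monotonicity) argument applied to the Lyapunov differential inequality \eqref{eq:1}. I would introduce $M_i(\tau):=e^{-\bar A^T\tau}S_i(\tau)e^{-\bar A\tau}$, for which $\dot M_i=e^{-\bar A^T\tau}\big(\dot S_i-\bar A^TS_i-S_i\bar A\big)e^{-\bar A\tau}\succeq0$ by \eqref{eq:1}; hence $M_i$ is nondecreasing and $M_i(\theta)\succeq M_i(0)$, i.e. $S_i(\theta)\succeq e^{\bar A^T\theta}S_i(0)e^{\bar A\theta}$. Combining this with \eqref{eq:3} (which gives $S_i(0)\succeq\sum_j\pi_{ji}P_j$, then congruence by $e^{\bar A\theta}$) and with \eqref{eq:2} (which gives $\bar J_i^TS_i(\theta)\bar J_i\preceq P_i-\eps I$), chaining through $\bar J_i$ yields $\bar J_i^Te^{\bar A^T\theta}(\sum_j\pi_{ji}P_j)e^{\bar A\theta}\bar J_i-P_i\preceq-\eps I\prec0$, which is exactly \eqref{eq:0}. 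I would also note here that $\sum_j\pi_{ji}P_j\succ0$ (since $\Pi$ is column-stochastic) forces $S_i(0)\succ0$, and the monotonicity propagates this to $S_i(\tau)\succ0$ on all of $[0,T_{max}]$; this positive-definiteness is precisely what legitimizes the congruence used next.

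For $(b)\Leftrightarrow(c)$ I would apply the dualizing change of variables $\tilde P_i:=P_i^{-1}$, $\tilde S_i(\tau):=S_i(\tau)^{-1}$ and $U_i:=K_iP_i^{-1}$, a bijection once positive-definiteness of the slack functions is secured. Congruence of \eqref{eq:1} by $S_i^{-1}$, using $\tfrac{d}{d\tau}S_i^{-1}=-S_i^{-1}\dot S_iS_i^{-1}$, turns it into the first inequality of $(c)$. Writing $\bar J_i=\bar J_i^0+\bar J_i^1K_i$ gives $\bar J_i\tilde P_i=\bar J_i^0\tilde P_i+\bar J_i^1U_i$, so a Schur complement of the block inequality of $(c)$ against its $-\tilde S_i(\theta)$ block, followed by congruence by $P_i$, recovers \eqref{eq:2} (the strict form being equivalent to the $\eps I$ form). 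Finally, with $V_i=\col_{j}\{\pi_{ji}^{1/2}I\}$, two Schur complements applied to \eqref{eq:djsqsl} (first against $\tilde S_i(0)^{-1}=S_i(0)$, then against $\diag_{j}\{\tilde P_j\}^{-1}=\diag_{j}\{P_j\}$) reduce it to $S_i(0)-\sum_j\pi_{ji}P_j\succeq0$, i.e. \eqref{eq:3}, via $V_i^T\diag_{j}\{P_j\}V_i=\sum_j\pi_{ji}P_j$. Positive-definiteness of $\tilde S_i$ needed in the $(c)\Rightarrow(b)$ direction comes for free: the $(2,2)$ block of the block inequality forces $\tilde S_i(\theta)\succ0$ on $[T_{min},T_{max}]$, and the dual monotonicity $\tfrac{d}{d\tau}\big(e^{\bar A\tau}\tilde S_i(\tau)e^{\bar A^T\tau}\big)\preceq0$ extends it to $[0,T_{max}]$.

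The ``moreover'' statement is then immediate, since $K_i=U_i\tilde P_i^{-1}$ and $P_i=\tilde P_i^{-1}$ is just the inverse of the above substitution, so $(a)$ holds for these matrices and Proposition~\ref{prop:1} delivers asymptotic stability under the range dwell-time condition. The hard part, I expect, is not any single computation but the comparison-lemma step and the accompanying bookkeeping that keeps $S_i$ and $\tilde S_i$ positive definite on the \emph{entire} interval $[0,T_{max}]$ — including $[0,T_{min})$, where no algebraic inequality is imposed — because the $(b)\Leftrightarrow(c)$ congruence silently relies on their invertibility throughout.
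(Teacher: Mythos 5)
Your proposal is correct and follows essentially the same route as the paper's (sketched) proof: the explicit certificate $S_i^\ast(\tau)=e^{\bar A^T\tau}\left[\sum_{j=1}^N\pi_{ji}P_j\right]e^{\bar A\tau}$ for (a)$\Rightarrow$(b), the monotonicity/integration of \eqref{eq:1} for (b)$\Rightarrow$(a), and the change of variables $\tilde P_i=P_i^{-1}$, $\tilde S_i=S_i^{-1}$, $U_i=K_i\tilde P_i$ combined with Schur complements for (b)$\Leftrightarrow$(c). Your extra bookkeeping --- the uniform $\eps$ obtained by compactness of $[T_{min},T_{max}]$ and the positive definiteness of $S_i$ and $\tilde S_i$ on all of $[0,T_{max}]$, which legitimizes the inversions --- supplies precisely the details the paper omits.
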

\begin{proof}
\bblue{\textbf{Proof that (a) implies (b).} Assume that the conditions in statement (a) hold and define
\begin{equation}
    S_i^\ast(\tau):=e^{\bar A^T\tau}\left[\sum_{j=1}^N\pi_{ji}P_j\right]e^{\bar A\tau}.
\end{equation}
We show now that $S_i=S_i^\ast$ verifies the conditions in statement (b). Differentiating $S_i^\ast(\tau)$ with respect to $\tau$ yields
\begin{equation}
  \dot{S}^\ast_i(\tau)=\bar A^TS_i^\ast(\tau)+S_i^\ast(\tau)\bar A
\end{equation}
and, hence, the condition \eqref{eq:1} is verified with $S_i=S_i^\ast$. Similarly, evaluating $S_i^\ast(\tau)$ at $\tau=0$ yields
\begin{equation}
  S_i^\ast(0)=\sum_{j=1}^N\pi_{ji}P_j
\end{equation}
and, therefore, the condition \eqref{eq:3} is satisfied with $S_i=S_i^\ast$. Finally, substituting $S_i=S_i^\ast$ in the LHS of \eqref{eq:2} yields
\begin{equation}
  \bar J_i^Te^{\bar A^T\theta}\left(\sum_{j=1}^N\pi_{ji}P_j\right)e^{\bar A\theta}\bar J_i-P_i+\eps I
\end{equation}
which is negative semidefinite since it was assumed that the statement (a) holds or, equivalently, that the condition \eqref{eq:0} holds. The proof is completed.}\\

\noindent\textbf{Proof that (b) implies (a).} Assume that the conditions of statement (b) hold. Then, integrating \eqref{eq:1} over $[0,\theta]$ yields \cite{Gajic:95}
\begin{equation*}
  e^{\bar A^T\theta}S_i(0)e^{\bar A\theta}-S_i(\theta)\preceq0
\end{equation*}
which together with \eqref{eq:3} implies that
\begin{equation*}
  e^{\bar A^T\theta}\left[\sum_{j=1}^N\pi_{ji}P_j\right]e^{\bar A\theta}-S_i(\theta)\preceq0.
\end{equation*}
By pre- and post-multiplying the above inequality by $\bar J_i^T$ and $\bar J_i$, respectively, and using \eqref{eq:2}, we finally obtain \eqref{eq:0}. The proof is completed.\\

\noindent\textbf{Proof that (b) is equivalent to (c).} This follows from the changes of variables $U_i=K_i\tilde{P}_i$, $\tilde P_i=P_i^{-1}$, $\tilde S_i(\tau)=S_i(\tau)^{-1}$ and Schur complements.
\end{proof}

The conditions stated in statement (c) are more convenient to consider than those stated in Proposition \ref{prop:1} since the uncertainty $\theta$ is not at the exponential anymore and the conditions are convex in the decision matrices $P_i$'s and $K_i$'s whenever the matrix $\Pi$ is chosen a priori. More specifically, these conditions are convex infinite-dimensional LMI conditions that can be solved in an efficient way using discretization methods \cite{Allerhand:11,Xiang:15a,Briat:15f} or sum of squares methods \cite{Parrilo:00,Briat:13d,Briat:14f,Briat:15f}, which can be both shown to be asymptotically exact. On the other hand, when $\Pi$ has to be designed, the problem becomes nonlinear and may be difficult to solve; see \cite{Geromel:06b,Geromel:06d} for some discussion on how to potentially solve such a problem.

When only the scheduling law needs to be designed, the following immediate corollary should be considered:
\begin{corollary}\label{cor:1}
Assume that $B=0$ in \eqref{eq:mainsyst_imp} and let $0<T_{min}\le T_{max}<\infty$. Then, the following statements are equivalent:
\begin{enumerate}[(a)]
  \item There exist matrices $P_i\in\mathbb{S}^n_{\succ0}$, $i=1,\ldots,N$, and a nonnegative matrix $\Pi\in\mathbb{R}^{N\times N}$ verifying $\mathds{1}_N^T\Pi=\mathds{1}_N^T$ such that the condition
  \begin{equation}
  J_i^Te^{A^T\theta}\left(\sum_{j=1}^N\pi_{ji}P_j\right)e^{A\theta}J_i-P_i\prec0
\end{equation}
holds for all $i=1,\ldots,N$ and all $\theta\in[T_{min},T_{max}]$.
\item There exist some differentiable matrix-valued functions $S_i:[0,T_{max}]\mapsto\mathbb{S}^n$, matrices $P_i\in\mathbb{S}_{\succ0}^n$, $i=1,\ldots,N$, a nonnegative matrix $\Pi\in\mathbb{R}^{N\times N}$ verifying $\mathds{1}_N^T\Pi=\mathds{1}_N^T$  and a scalar $\eps>0$ such that the conditions
\begin{equation}\label{eq:cor1}
  -\dot{S}_i(\tau)+A^TS_i(\tau)+S_i(\tau)A\preceq0
\end{equation}
\begin{equation}
  -P_i+J_i^TS_i(\theta)J_i+\eps I\preceq0
\end{equation}
and
\begin{equation}
  \sum_{j=1}^N\pi_{ji}P_j-S_i(0)\preceq0
\end{equation}
hold for all $i=1,\ldots,N$, all $\tau\in[0,T_{max}]$ and all $\theta\in[T_{min},T_{max}]$.
\end{enumerate}
Moreover, when the conditions of statement (b) are verified, then the system \eqref{eq:mainsyst} with $B=0$ controlled with the rule
\begin{equation}\label{eq:rule1b}
  \sigma(t_k^+)=\argmin_{i\in\{1,\ldots,N\}}\left\{x(t_k)^TP_ix(t_k)\right\}
\end{equation}
is asymptotically stable for any sequence $\tkZ$ satisfying the range dwell-time condition $t_{k+1}-t_k\in[T_{min},T_{max}]$.
\end{corollary}

\blue{\subsection{Numerical verification of the conditions and asymptotic exactness}

There are multiple ways for solving the conditions in Theorem \ref{th:main_IMP}. A first approach is to assume that the matrix-valued functions $S_i$ are piecewise linear, which would then result in a finite set of finite-dimensional LMI conditions; see e.g. \cite{Allerhand:11,Allerhand:13}. This approach is easy to use but may not be efficient because of its high computational complexity and its poor convergence properties as the number of pieces increase \cite{Briat:15f}. An alternative one is based on polynomial optimization techniques either relying on Handelman's Theorem \cite{Handelman:88,Briat:16c,Scherer:06,Kamyar:15,Le:18} or Putinar's Positivstellensatz \cite{Putinar:93,Parrilo:00,sostools3}. Both of these methods will result in a finite-dimensional semidefinite program which can be solved using standard solvers such as SeDuMi \cite{Sturm:01a}. \bblue{We propose to use here an approach based on sums of squares.} The conversion to a semidefinite program can be performed using the package SOSTOOLS \cite{sostools3} to which we input the SOS program corresponding to the considered conditions. We illustrate below how an SOS program associated with some given conditions can be obtained. We define the following intervals
\begin{equation}
  [0, T_{max}]=\left\{\tau\in\mathbb{R}:\ g(\tau):=\tau(T_{max}-\tau)\ge0\right\}.
\end{equation}
and
\bblue{\begin{equation}
  [T_{min}, T_{max}]=\left\{\tau\in\mathbb{R}:\ h(\tau):=(\tau-T_{min})(T_{max}-\tau)\ge0\right\}.
\end{equation}}
In what follows, we say that a symmetric polynomial matrix $\Theta(\cdot)$ is a sum of squares matrix (SOS matrix) or is SOS, for simplicity, if there exists a polynomial matrix $\Xi(\cdot)$ such that $\Theta(\cdot)=\Xi(\cdot)^{T}\Xi(\cdot)$.

\begin{mybox*}
\caption{SOS program associated with Corollary \ref{cor:1}, (b)}\label{box}
{\vspace{1mm}}
\noindent\fbox{
\parbox{\textwidth}{
Find symmetric matrix-valued polynomial functions $S_i,\Gamma_i,\Delta_i:\mathbb{R}\mapsto\mathbb{S}^{n+m}$, $i=1,\ldots,N$, constant symmetric matrices $P_i\in\mathbb{S}^{n+m}$, $i=1,\ldots,N$, a nonnegative matrix $\Pi\in\mathbb{R}^{N\times N}$ such that $\mathds{1}^T\Pi=\mathds{1}^T$ and a scalar $\eps>0$ such that
      \begin{itemize}
        \item $\Gamma_i(\cdot),\Delta_i(\cdot)$, $i=1,\ldots,N$, are SOS matrices,
        \item $P_i-\eps I$, $i=1,\ldots,N$, are SOS matrices,
        \item $\dot{S}_i(\tau)-\He[S_i(\tau)\bar A_i] -\Gamma_i(\tau)g(\tau)$ is an SOS matrix for all $i=1,\ldots,N$,
        \item $P_i-\bar{J}_i^TS_i(\theta)\bar{J}_i-\eps I -\Delta_i(\theta)h(\theta)$ is an SOS matrix for all $i=1,\ldots,N$,
        \item $-\sum_{j=1}^N\pi_{ji}P_j +S_i(0)$ is an SOS matrix for all $i=1,\ldots,N$.
    \end{itemize}}}
\end{mybox*}

\begin{proposition}[\cite{Parrilo:00}]
  A univariate polynomial matrix is positive semidefinite if and only if it is SOS.
\end{proposition}

We also need the following result:
\begin{proposition}[\cite{Scherer:06,Chesi:10b}]\label{prop:sp}
  Let us consider a univariate polynomial symmetric matrix $M(\cdot)$. Then, the following statements are equivalent:
 \begin{enumerate}[(a)]
    \item The matrix $M(\theta)$ is positive semidefinite for all  $\theta\in[T_{min}, T_{max}]$.
    \item There exists an SOS matrix $N(\cdot)$ such that the matrix $M(\theta)-N(\theta)h(\theta)$ is SOS.
 \end{enumerate}
\end{proposition}

The following result provides the sum of squares formulation of Theorem Corollary \ref{cor:1}, (b):
\begin{proposition}\label{prop:cor1}
  Let $\eps$ and $0<T_{min}\le T_{max}<\infty$ be given. The conditions of Corollary \ref{cor:1}, (b), are feasible with polynomial matrices $S_i$ if and only if the  sum of squares program in Box \ref{box} is feasible. Moreover, we have that the conditions in Corollary \ref{cor:1}, (a) hold.
\end{proposition}
\begin{proof}
Clearly the first statement in Box \ref{box} is equivalent to saying that the matrix-valued functions $\Gamma_i(\cdot),\Delta_i(\cdot)$ are positive semidefinite for all $i=1,\ldots,N$. The second statement is equivalent to saying that the matrices $P_i$ are positive definite for all $i=1,\ldots,N$. Since $\Gamma_i(\tau)\succeq0$ for all $\tau\in\mathbb{R}$ and $g(\tau)\ge0$ for all $\tau\in[0,\bar T]$, then, from Proposition \ref{prop:sp}, we can see that the second statement is equivalent to saying that $-\dot{S}_i(\tau)+\He[S_i(\tau)\bar{A}_i]\succeq0$ for all $\tau\in[0,\bar T]$, which coincides with the condition \eqref{eq:1} in Corollary \ref{cor:1}, (b). Similarly, the fourth statement is equivalent to saying that $-P_i+\bar{J}_i^TS_i(\theta)\bar{J}_i+\eps I\preceq0$ for all $\theta\in[T_{min},T_{max}]$, which coincides with the condition \eqref{eq:2} in Corollary \ref{cor:1}, (b). Finally, the last statement is equivalent to the condition \eqref{eq:3} in Corollary \ref{cor:1}, (b). This proves the first part of the result. The second part follows from the fact that (b) implies (a) in Corollary \ref{cor:1}.
\end{proof}

We have proved above that the conditions of Corollary \ref{cor:1}, (b) can be verified by solving a semidefinite program if we restrict ourselves to polynomial matrices. In this regard, we have proved that we have a sufficient SOS condition for assessing whether the statement (a) of Corollary \ref{cor:1} holds. The following result proves that we still have the equivalence between (a) and (b) when considering polynomial matrices.
\begin{proposition}[Asymptotic exactness]\label{prop:exact1}
 Let $0<T_{min}\le T_{max}<\infty$ be given. Assume that the conditions of Corollary \ref{cor:1}, (a) hold. Then, there exist $\eps>0$ and $d\in\mathbb{Z}_{\ge0}$ such that the SOS program in Box \ref{box} is feasible using polynomial matrices of degree $2d$.
\end{proposition}
\begin{proof}
  The general closed form for the $S_i$'s which solve the conditions in Corollary \ref{cor:1}, (b), is given by
  \begin{equation}
    S_i^\ast(\tau)=e^{\bar A^T\tau}\left[\sum_{j=1}^N\pi_{ji}P_j\right]e^{\bar A\tau}+\int_0^\tau \bar{J}_i^Te^{\bar A^T(\tau-s)}W_i(s)e^{\bar A(\tau-s)}\bar{J}_ids,\tau\in[0,T_{max}].
  \end{equation}
  where $W_i(s)\succeq0$ for all $i=1,\ldots,N$. Note that only the case $W_i\equiv0$ was considered in the proof of Theorem \eqref{th:main_IMP}. This expression is not a matrix polynomial except in some very particular cases. \bblue{However, this function is continuously differentiable and, hence, by virtue of the Stone-Weierstrass theorem, it can be uniformly approximated along with its derivative with respect to $\tau$ as closely as desired by a polynomial matrix.} This proves the result.
\end{proof}

\begin{theorem}\label{th:NCS1}
   Let $0<T_{min}\le T_{max}<\infty$ be given. Then, the following statements are equivalent:
   \begin{enumerate}[(a)]
     \item The conditions of Corollary \ref{cor:1}, (a) hold.
     \item There exist some scalars $\eps>0$ and $d\in\mathbb{Z}_{\ge0}$ such that the SOS program in Box \ref{box} is feasible using polynomial matrices of degree $2d$.
   \end{enumerate}
\end{theorem}
\begin{proof}
  The proof follows from Corollary \ref{cor:1} and Proposition \ref{prop:exact1}.
\end{proof}

%
%\begin{remark}
%The conditions of Theorem \ref{th:minDT} can be checked by simply adding the following constraints to the SOS program in Box \ref{box}:
%\begin{itemize}
%  \item $ \Upsilon_3^i(\tau,\theta,\mu)$, $i=1,\ldots,M$,  are SOS matrices for all $\mu\in\mathcal{D}^v $
%  %
%  \item
%\end{itemize}
%where $\Upsilon_3^i:\mathcal{D}^v \times [0,\bar{T}]\times\mathcal{P}\mapsto\mathbb{S}^n$, $i=1,\ldots,M$, are additional polynomial variables.
%\end{remark}

\begin{mybox*}
\caption{SOS program associated with Theorem \ref{th:main_IMP}, (c)}\label{box2}
{\vspace{1mm}}
\noindent\fbox{
\parbox{\textwidth}{
Find symmetric matrix-valued polynomial functions $\tilde{S}_i,\Gamma_i,\Delta_i:\mathbb{R}\mapsto\mathbb{S}^{n+m}$, $i=1,\ldots,N$, constant symmetric matrices $\tilde{P}_i\in\mathbb{S}^{n+m}$, $i=1,\ldots,N$, constant matrices $U_i\in\mathbb{R}^{m\times(n+m)}$, $i=1,\ldots,N$, a nonnegative matrix $\Pi\in\mathbb{R}^{N\times N}$ such that $\mathds{1}^T\Pi=\mathds{1}^T$ and a scalar $\eps>0$ such that
      \begin{itemize}
        \item $\Gamma_i(\cdot),\Delta_i(\cdot)$, $i=1,\ldots,N$, are SOS matrices,
        \item $\tilde{P}_i-\eps I$, $i=1,\ldots,N$, are SOS matrices,
        \item $-\dot{\tilde{S}}_i(\tau)-\He[\tilde{S}_i(\tau)\bar A_i^T] -\Gamma_i(\tau)g(\tau)$ is an SOS matrix for all $i=1,\ldots,N$,
        \item $\begin{bmatrix}
          \tilde{P}_i-\eps I & (\bar{J}_i^0\tilde{P}_i+\bar{J}_i^1U_i)^T\\
          \bar{J}_i^0\tilde{P}_i+\bar{J}_i^1U_i & \tilde{S}_i(\theta)\bblue{-\Delta_i(\theta)h(\theta)}
        \end{bmatrix}$ is an SOS matrix for all $i=1,\ldots,N$,
        \item $\diag_{j=1}^N[\tilde{P}_j] - V_i\tilde{S}_i(0)V_i^T$ is an SOS matrix for all $i=1,\ldots,N$.
    \end{itemize}}}
\end{mybox*}

We have the analogous result for  Theorem \ref{th:main_IMP}, (c):
\begin{theorem}\label{th:NCS2}
   Let $0<T_{min}\le T_{max}<\infty$ be given. Then, the following statements are equivalent:
   \begin{enumerate}[(a)]
     \item The conditions of Theorem \ref{th:main_IMP}, (a) hold.
     \item There exist some scalars $\eps>0$ and $d\in\mathbb{Z}_{\ge0}$ such that the SOS program in Box \ref{box2} is feasible using polynomial matrices of degree $2d$.
   \end{enumerate}
\end{theorem}
\begin{proof}
  The proof is identical to that of Theorem \ref{th:NCS1}.
\end{proof}}

\section{Results for sampled-data impulsive switched systems}\label{sec:switched}

\subsection{Preliminaries}

Let us now consider the following class of switched impulsive systems with multiple jump maps and sampled-data state-feedback:
\begin{equation}\label{eq:mainsyst}
\begin{array}{rcl}
  \dot{x}(t)&=&A_{\sigma(t)}x(t)+B_{\sigma(t)}u(t),\ t\in \mathbb{R}_{\ge0}\backslash\tkZ\\
  u(t)&=&K_{\sigma(t_k^+),\sigma(t_k)}^1x(t_k)+K_{\sigma(t_k^+),\sigma(t_k)}^2u(t_k),\ t\in(t_k,t_{k+1}]\\
  x(t_k^+)&=&J_{\sigma(t_k^+),\sigma(t_k)}x(t_k),\ k\in\mathbb{Z}_{\ge0}
\end{array}
\end{equation}
where $x,x_0\in\mathbb{R}^n$ and $u\in\mathbb{R}^{m}$ are the state of the system, the initial condition and the control input, respectively. As before, the switching signal $\sigma:\mathbb{R}_{\ge0}\mapsto\{1,\ldots,N\}$ is assumed to be piecewise constant and to only change values on $\tkZ$.

The above system can be equivalently represented by the impulsive-switched system
\begin{equation}\label{eq:dksldskdl1}
  \begin{array}{rcl}
    \dot{\chi}(t)&=&\bar{A}_{\sigma(t)}\chi(t),\ t\in \mathbb{R}_{\ge0}\backslash\tkZ\\
   \chi(t_k^+)&=&\bar{J}_{\sigma(t_k^+),\sigma(t_k)}\chi(t_k),\ k\in\mathbb{Z}_{\ge0}
  \end{array}
\end{equation}
with $\chi=\col(x,u)$, $K_{j,i}:=[K_{j,i}^1\ K_{j,i}^2]$,
\begin{equation}\label{eq:dksldskdl2}
  \bar{A}_i=\begin{bmatrix}
      A_i & B_i\\
      0 & 0
    \end{bmatrix},\ \bar{J}_{j,i}=\begin{bmatrix}
      J_{j,i}& 0\\
      K^1_{j,i} & K^2_{j,i}
    \end{bmatrix}=:\bar{J}_{j,i}^0+\bar{J}_{j,i}^1K_{j,i}
\end{equation}
for $i,j=1,\ldots,N$ and for which we propose the jump rule:
\begin{equation}\label{eq:rule2}
  \sigma(t_k^+)=\argmin_{j\in\{1,\ldots,N\}}\left\{\chi(t_k)^T\bar{J}_{j,\sigma(t_k)}^TP_{j}\bar{J}_{j,\sigma(t_k)}\chi(t_k)\right\}.
\end{equation}

\subsection{Main results}

The following result states a sufficient condition for the stability of the system \eqref{eq:mainsyst} controlled with the min-jumping rule \eqref{eq:rule2}:
\begin{proposition}\label{prop:2}
   Let $0<T_{min}\le T_{max}<\infty$ and assume that there exist matrices $P_i\in\mathbb{S}^{n+m}_{\succ0}$ and a nonnegative matrix $\Pi\in\mathbb{R}^{N\times N}$ verifying $\mathds{1}_N^T\Pi=\mathds{1}_N^T$ such that the condition
        \begin{equation}\label{dsjkldsaldjasdjal}
                    e^{\bar A_i^T\theta}\left(\sum_{j=1}^N\pi_{ji}\bar J_{j,i}^TP_j\bar J_{j,i}\right)e^{\bar A_i\theta}-P_i\prec0
        \end{equation}
        holds for all $i=1,\ldots,N$ and all $\theta\in[T_{min},T_{max}]$.
%
%
%   there exist matrices  $P\in\mathbb{S}^{N(n+m)}_{\succ0}$, $K_{ij}^1\in\mathbb{R}^{m\times n}$, $K_{ij}^2\in\mathbb{R}^{m\times m}$, $i,j=1,\ldots,N$, and a nonnegative matrix $\Pi\in\mathbb{R}^{N\times N}$ verifying $\mathds{1}_N^T\Pi=\mathds{1}_N^T$ and such that the condition
%%
%\begin{equation}\label{ea:dhsjkhdskd}
%  e^{\mathcal{A}^T\bar T}\left(\sum_{j=1}^N\pi_{ji}\mathcal{J}_{j,i}^TP\mathcal{J}_{j,i}\right)e^{\mathcal{A}\bar T}-P\prec0
%\end{equation}
%holds for all $i=1,\ldots,N$ where
%\begin{equation}
%\begin{array}{rcl}
%    \mathcal{A}&:=&\diag_{i=1}^N\{\bar A_i\},\ \textnormal{and}\\
%    \mathcal{J}_{j,i}&:=&\left(e_{j}e_{i}^T\right)\otimes \bar{J}_{j,i},\ i,j=1,\ldots,N.
%\end{array}
%\end{equation}

Then, the system \eqref{eq:mainsyst} controlled with the min-jumping rule \eqref{eq:rule2} is asymptotically stable for any sequence $\tkZ$ satisfying the range dwell-time condition $t_{k+1}-t_k\in[T_{min},T_{max}]$.
\end{proposition}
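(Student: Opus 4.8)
The plan is to mirror the proof of Proposition \ref{prop:1} by reducing the switched impulsive system \eqref{eq:dksldskdl1} to a discrete-time switched system sampled at the instants $\tkZ$, and then to run a Geromel-type min-switching Lyapunov argument while carefully tracking the two mode indices that now appear. Writing $i_k:=\sigma(t_k)$ for the mode active on $(t_{k-1},t_k]$ and $j_k:=\sigma(t_k^+)$ for the mode selected by the rule \eqref{eq:rule2}, flowing \eqref{eq:dksldskdl1} over one inter-sample interval gives the recursion
\begin{equation*}
  \chi(t_{k+1})=e^{\bar A_{j_k}T_k}\bar J_{j_k,i_k}\chi(t_k),\qquad i_{k+1}=j_k,
\end{equation*}
so the discrete state $z_k:=\chi(t_k)$ together with the auxiliary mode $i_k$ evolves as a switched system whose one-step transition map is parametrized by the dwell-time $T_k\in[T_{min},T_{max}]$.

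The central object will be the mode-dependent Lyapunov function equal to the post-jump value of the selected quadratic, namely
\begin{equation*}
  V(z_k,i_k):=\min_{j}\ z_k^T\bar J_{j,i_k}^TP_j\bar J_{j,i_k}z_k=\chi(t_k^+)^TP_{j_k}\chi(t_k^+),
\end{equation*}
which is exactly the quantity minimized by the rule \eqref{eq:rule2}. First I would bound its value at the next step: since $\mathds{1}_N^T\Pi=\mathds{1}_N^T$ makes each column of $\Pi$ a convex combination, the minimum over $j'$ is no larger than the $\pi_{\cdot,j_k}$-weighted average, giving
\begin{equation*}
  V(z_{k+1},i_{k+1})\le z_{k+1}^T\Big(\sum_{j'}\pi_{j',j_k}\bar J_{j',j_k}^TP_{j'}\bar J_{j',j_k}\Big)z_{k+1}.
\end{equation*}
Substituting $z_{k+1}=e^{\bar A_{j_k}T_k}\bar J_{j_k,i_k}z_k$ and invoking the hypothesis \eqref{dsjkldsaldjasdjal} with index $i=j_k$ and $\theta=T_k$ then yields $V(z_{k+1},i_{k+1})<z_k^T\bar J_{j_k,i_k}^TP_{j_k}\bar J_{j_k,i_k}z_k=V(z_k,i_k)$, so $V$ is a strictly decreasing, nonnegative function along the sampled trajectory.

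To close the argument I would promote this discrete decrease to asymptotic stability of the continuous-time trajectory. A uniform contraction estimate follows because \eqref{dsjkldsaldjasdjal} holds for every $\theta\in[T_{min},T_{max}]$ and there are finitely many index pairs, so one obtains a single $\rho\in(0,1)$ with $V(z_{k+1},i_{k+1})\le\rho\,V(z_k,i_k)$; since each $P_i\succ0$ gives $V(z_k,i_k)\ge\lambda\|\chi(t_k^+)\|^2$ with $\lambda:=\min_i\lambda_{\min}(P_i)>0$, the post-jump states $\chi(t_k^+)$ decay geometrically, and on each interval $\chi(t)=e^{\bar A_{j_k}(t-t_k)}\chi(t_k^+)$ with $t-t_k\le T_{max}$ is bounded by $\sup_{i,\,s\in[0,T_{max}]}\|e^{\bar A_i s}\|<\infty$, so the whole trajectory converges. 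The main obstacle, and the genuine departure from the fixed-matrix setting of \cite{Geromel:06d}, is precisely this uniformity in the dwell-time: the transition matrix depends on $T_k$ through $e^{\bar A_{j_k}T_k}$, so the contraction cannot be read off a single Schur-stable matrix but must hold simultaneously for the whole family $\theta\in[T_{min},T_{max}]$, which is exactly what hypothesis \eqref{dsjkldsaldjasdjal} supplies and what the later clock-dependent reformulation is designed to verify.
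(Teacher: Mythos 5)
Your proof is correct and takes essentially the same route as the paper's: reduce to the discrete-time recursion at the sampling instants, use the min-of-quadratics Lyapunov function matched to the rule \eqref{eq:rule2}, bound the minimum by the convex combination supplied by the column-stochasticity of $\Pi$, and invoke \eqref{dsjkldsaldjasdjal} with $\theta=T_k$ (your indexing, comparing $V$ at steps $k+1$ and $k$ with the hypothesis applied at index $j_k$, is just a shift of the paper's $k$ versus $k-1$ chain). The only addition is that you make explicit the uniform contraction constant and the inter-sample bound $\sup_{i,\,s\in[0,T_{max}]}\|e^{\bar A_i s}\|<\infty$, which the paper compresses into the remark that stability of the discrete-time system is equivalent to that of \eqref{eq:mainsyst}.
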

\begin{proof}
   %Finally, the proof that (i) implies that the system \eqref{eq:mainsyst} with the rule \eqref{eq:rule2} is asymptotically follows from similar arguments as in \cite{Geromel:06d} with the slight change that switching now also depends on the current value of the switching signal.
   The proof consists of an extension of a proof in \cite{Geromel:06d} and is given below for completeness. Let $\sigma(t_k)=\sigma(t_{k-1}^+)=i$ and assume that the conditions of Proposition \ref{prop:2} hold. Define then the function
   \begin{equation}
   \begin{array}{rcl}
   \blue{V(\chi(t_k^+))}&:=&\min_{j\in{1,\ldots,N}}\left\{\chi(t_k^+)^TP_{j}\chi(t_k^+)\right\}\\
     &=&\min_{j\in{1,\ldots,N}}\left\{\chi(t_k)^T\bar{J}_{j,i}^TP_{j}\bar{J}_{j,i}\chi(t_k)\right\}
     \end{array}
   \end{equation}
   which is consistent with the rule \eqref{prop:2}. \blue{Note that the function $V(\chi(t_k^+))$ is radially unbounded since all the matrices $P_{j}$'s are positive definite.} Then, we have that
   \begin{equation}
   \begin{array}{rcl}
     \blue{V(\chi(t_k^+))}&=&\min_{\substack{\lambda\ge0\\ \mathds{1}_N^T\lambda=1}}\left\{\chi(t_k)^T\left(\sum_{j=1}^N\lambda_j\bar{J}_{j,i}^TP_{j}\bar{J}_{j,i}\right)\chi(t_k)\right\}\\
    % &\le&\chi(t_k)^T\left(\sum_{j=1}^N\pi_{ji}\bar{J}_{j,i}^TP_{j}\bar{J}_{j,i}\right)\chi(t_k)\\
     &\le&\chi(t_{k-1}^+)^Te^{A_i^TT_k}\left(\sum_{j=1}^N\pi_{ji}\bar{J}_{j,i}^TP_{j}\bar{J}_{j,i}\right)e^{A_iT_k}\chi(t_{k-1}^+)\\
     &<&\chi(t_{k-1}^+)^TP_i\chi(t_{k-1}^+)=V(\chi(t_{k-1}^+))
   \end{array}
   \end{equation}
   provided that $T_k\in[T_{min},T_{max}]$. So, $V$ is a discrete-time Lyapunov function for the system
   \begin{equation}
     \chi(t_k^+)=\bar{J}_{\sigma(t_k^+),\sigma(t_k)}e^{\bar{A}_{\sigma(t_k)}T_k} \chi(t_{k-1}^+)
   \end{equation}
   controlled with the rule \eqref{eq:rule2}. Since the stability of the above discrete-time system is equivalent to that of \eqref{eq:mainsyst} under the  min-jumping rule \eqref{eq:rule2} then we can conclude that the system \eqref{eq:mainsyst} controlled with the min-jumping rule \eqref{eq:rule2} is asymptotically stable for any sequence $\tkZ$ satisfying the range dwell-time condition $t_{k+1}-t_k\in[T_{min},T_{max}]$. This completes the proof.
\end{proof}

As for Proposition \ref{prop:1}, the conditions in Proposition \ref{prop:2} are not directly tractable and need to be reformulated in this regard. This is stated in the following result:
\begin{theorem}\label{th:sw}
  Let $0<T_{min}\le T_{max}<\infty$. Then, the following statements are equivalent:
  \begin{enumerate}[(a)]
%\red{    \item There exist matrices  $P\in\mathbb{S}^{N(n+m)}_{\succ0}$, $K_{ij}^1\in\mathbb{R}^{m\times n}$, $K_{ij}^2\in\mathbb{R}^{m\times m}$, $i,j=1,\ldots,N$, and a nonnegative matrix $\Pi\in\mathbb{R}^{N\times N}$ verifying $\mathds{1}_N^T\Pi=\mathds{1}_N^T$ and such that the conditions in Proposition \ref{prop:2} hold.
%%
%\begin{equation}
%  e^{\mathcal{A}^T\bar T}\left(\sum_{j=1}^N\pi_{ji}\mathcal{J}_{j,i}^TP\mathcal{J}_{j,i}\right)e^{\mathcal{A}\bar T}-P\prec0
%\end{equation}
%holds for all $i=1,\ldots,N$ where
%\begin{equation}
%\begin{array}{rcl}
%    \mathcal{A}&:=&\diag_{i=1}^N\{\bar A_i\},\ \textnormal{and}\\
%    \mathcal{J}_{j,i}&:=&\left(e_{j}e_{i}^T\right)\otimes \bar{J}_{j,i},\ i,j=1,\ldots,N.
%\end{array}
%\end{equation}
%    }%
    \item There exist matrices $P_i\in\mathbb{S}^{n+m}_{\succ0}$, $K_{ij}\in\mathbb{R}^{m\times (n+m)}$, $i,j=1,\ldots,N$, and a nonnegative matrix $\Pi\in\mathbb{R}^{N\times N}$ verifying $\mathds{1}_N^T\Pi=\mathds{1}_N^T$ such that the conditions in Proposition \ref{prop:2} hold.
%        \begin{equation}
%                    e^{\bar A_i^T\bar T}\left(\sum_{j=1}^N\pi_{ji}\bar J_{j,i}^TP_j\bar J_{j,i}\right)e^{\bar A_i\bar T}-P_i\prec0
%        \end{equation}
%        holds for all $i=1,\ldots,N$.
        %
    \item There exist some differentiable matrix-valued functions $S_i:[0,T_{max}]\mapsto\mathbb{S}^{n+m}$, matrices $P_i\in\mathbb{S}_{\succ0}^{n+m}$, $K_i\in\mathbb{R}^{m\times(n+m)}$, $i=1,\ldots,N$, a nonnegative matrix $\Pi\in\mathbb{R}^{N\times N}$ verifying $\mathds{1}_N^T\Pi=\mathds{1}_N^T$  and a scalar $\eps>0$ such that the conditions
\begin{equation}
  -\dot{S}_i(\tau)+\bar A_i^TS_i(\tau)+S_i(\tau)\bar A_i\preceq0
\end{equation}
\begin{equation}
  -P_i+S_i(\theta)+\eps I\preceq0
\end{equation}
and
\begin{equation}
  \sum_{j=1}^N\pi_{ji}\bar J_{j,i}^TP_j\bar J_{j,i}-S_i(0)\preceq0
\end{equation}
hold for all $i=1,\ldots,N$, all $\tau\in[0,T_{max}]$ and all $\theta\in[T_{min},T_{max}]$.
\item There exist some differentiable matrix-valued functions $\tilde S_i:[0,T_{max}]\mapsto\mathbb{S}^{n+m}$, matrices $\tilde P_i\in\mathbb{S}_{\succ0}^{n+m}$, $U_{i,j}\in\mathbb{R}^{m\times (n+m)}$, $i,j=1,\ldots,N$, a nonnegative matrix $\Pi\in\mathbb{R}^{N\times N}$ verifying $\mathds{1}_N^T\Pi=\mathds{1}_N^T$  and a scalar $\eps>0$ such that the conditions
\begin{equation}
  \dot{\tilde S}_i(\tau)+\tilde S_i(\tau)\bar A_i^T+\bar A_i\tilde S_i(\tau)\preceq0
\end{equation}
\begin{equation}
    \tilde P_i-\tilde S_i(\theta)+\eps I\preceq0
\end{equation}
and
\begin{equation}
\begin{bmatrix}
  -\tilde S_i(0) & V_i^T\\
   V_i & -\diag_{j=1}^N\{\tilde P_j\}
\end{bmatrix}\preceq0
\end{equation}
hold for all $i=1,\ldots,N$ and all $\theta\in[T_{min},T_{max}]$ where $V_i=\col_{j=1}^N\{\pi_{ji}^{1/2}[\bar J_{j,i}^0\tilde{S}_i(0)+\bar J_{j,i}^1U_{j,i}]\}$.
  \end{enumerate}
  Moreover, when the conditions of statement (c) hold, then the conditions of Proposition \ref{prop:2} hold with $K_{j,i}=U_{j,i}\tilde{S}_i(0)^{-1}$, $i,j=1,\ldots,N$, $i\ne j$, and $P_i=\tilde{P}_i^{-1}$, $i=1,\ldots,N$. As a result, the system \eqref{eq:mainsyst} with controller gains $K_{j,i}=U_{j,i}\tilde{S}_i(0)^{-1}$ and min-jumping rule \eqref{eq:rule2} is asymptotically stable for any sequence $\tkZ$ satisfying the range dwell-time condition $t_{k+1}-t_k\in[T_{min},T_{max}]$.
  %Moreover, when the conditions of statement (d) hold, then the system \eqref{eq:mainsyst} with controller gains $K_{j,i}=U_{ji}\tilde{S}_i(0)^{-1}$ and min-jumping rule \eqref{eq:rule2} is asymptotically stable for any sequence $\tkZ$ satisfying the range dwell-time condition $t_{k+1}-t_k\in[T_{min},T_{max}]$.
\end{theorem}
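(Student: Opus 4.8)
The plan is to prove the equivalence $(a)\Leftrightarrow(b)\Leftrightarrow(c)$ by adapting verbatim the structure of the proof of Theorem \ref{th:main_IMP}, since the switched-impulsive conditions differ from the impulsive ones only by the replacement of $\bar A$ with $\bar A_i$ and of $\bar J_i^T(\cdot)\bar J_i$ with the weighted sum $\sum_j\pi_{ji}\bar J_{j,i}^T(\cdot)\bar J_{j,i}$. For $(a)\Rightarrow(b)$, I would again furnish an explicit witness: set $S_i^\ast(\tau):=e^{\bar A_i^T\tau}P_i\, e^{\bar A_i\tau}$. Then \eqref{eq:1}-analogue holds with equality (it is exactly the derivative identity for the matrix exponential), the \eqref{eq:3}-analogue holds because $S_i^\ast(0)=P_i$ and the condition $\sum_j\pi_{ji}\bar J_{j,i}^TP_j\bar J_{j,i}-P_i\prec0$ from Proposition \ref{prop:2} is precisely \eqref{dsjkldsaldjasdjal} evaluated via $S_i^\ast$, and the middle inequality $-P_i+S_i^\ast(\theta)+\eps I\preceq 0$ becomes $-P_i+e^{\bar A_i^T\theta}P_ie^{\bar A_i\theta}+\eps I\preceq0$, which for small $\eps$ is just a tightening of \eqref{dsjkldsaldjasdjal}. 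I would note that the placement of the $\bar J_{j,i}$ factors in the third (rather than second) inequality is the structural difference from the impulsive case and must be tracked carefully.

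For the converse $(b)\Rightarrow(a)$, the argument is the integration trick already used: integrating the first inequality over $[0,\theta]$ gives $e^{\bar A_i^T\theta}S_i(0)e^{\bar A_i\theta}-S_i(\theta)\preceq0$; combining with the third inequality $\sum_j\pi_{ji}\bar J_{j,i}^TP_j\bar J_{j,i}\preceq S_i(0)$ yields $e^{\bar A_i^T\theta}\big(\sum_j\pi_{ji}\bar J_{j,i}^TP_j\bar J_{j,i}\big)e^{\bar A_i\theta}\preceq S_i(\theta)$, and then the middle inequality $S_i(\theta)\preceq P_i-\eps I$ closes the chain to recover \eqref{dsjkldsaldjasdjal}. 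The congruence-by-$\bar J_i$ step of the impulsive proof is unnecessary here because the jump maps already sit inside the summation.

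The $(b)\Leftrightarrow(c)$ equivalence is where the main obstacle lies, and where the switched case genuinely diverges from the impulsive one. I would perform the congruence/inversion substitutions $\tilde P_i=P_i^{-1}$, $\tilde S_i(\tau)=S_i(\tau)^{-1}$, and the controller change of variables $U_{j,i}=K_{j,i}\tilde S_i(0)$ (note: the gains are now scaled by $\tilde S_i(0)$, \emph{not} by $\tilde P_i$ as in Theorem \ref{th:main_IMP}, which is dictated by the fact that the jumps appear in the $S_i(0)$-inequality). The first inequality transforms to its $(c)$-form by pre- and post-multiplying by $\tilde S_i(\tau)$ and using $\tfrac{d}{d\tau}\tilde S_i=-\tilde S_i\dot S_i\tilde S_i$; the second transforms by a direct inversion. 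The delicate step is the third inequality: I would rewrite $\sum_j\pi_{ji}\bar J_{j,i}^TP_j\bar J_{j,i}\preceq S_i(0)$ as $V^TZ^{-1}V\preceq S_i(0)$ where $Z=\diag_j\{\tilde P_j\}$ and the rows of $V$ are $\pi_{ji}^{1/2}\bar J_{j,i}$, then congruence by $\tilde S_i(0)$ and substitution of $\bar J_{j,i}=\bar J_{j,i}^0+\bar J_{j,i}^1K_{j,i}$ produces the stated $V_i=\col_j\{\pi_{ji}^{1/2}[\bar J_{j,i}^0\tilde S_i(0)+\bar J_{j,i}^1U_{j,i}]\}$, and a single Schur complement on $Z$ yields the block LMI of statement $(c)$. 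The bookkeeping of which variable ($\tilde P_i$ versus $\tilde S_i(0)$) linearizes each nonlinear term is the crux: getting $U_{j,i}=K_{j,i}\tilde S_i(0)$ right is what makes the $V_i$ expression affine in the decision variables. The final recovery claim $K_{j,i}=U_{j,i}\tilde S_i(0)^{-1}$ and $P_i=\tilde P_i^{-1}$ then follows by inverting these substitutions and invoking Proposition \ref{prop:2}.
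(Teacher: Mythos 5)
Your high-level plan coincides with the paper's proof, which is literally ``prove it the same way as Theorem \ref{th:main_IMP}'': your (b)$\Rightarrow$(a) integration argument is the correct adaptation (including the observation that no congruence by the jump map is needed, since the $\bar J_{j,i}$'s already sit inside the weighted sum), and your (b)$\Leftrightarrow$(c) treatment --- the inversions $\tilde P_i=P_i^{-1}$, $\tilde S_i=S_i^{-1}$, the linearization $U_{j,i}=K_{j,i}\tilde S_i(0)$ rather than $K_{j,i}\tilde P_i$, and a single Schur complement on $\diag_{j}\{\tilde P_j\}$ --- is exactly what produces the block LMI of statement (c) and the recovery formulas $K_{j,i}=U_{j,i}\tilde S_i(0)^{-1}$, $P_i=\tilde P_i^{-1}$.

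However, your (a)$\Rightarrow$(b) step has a genuine error: the witness $S_i^\ast(\tau)=e^{\bar A_i^T\tau}P_ie^{\bar A_i\tau}$ does not work, and both claims offered in its support are false. First, the inequality $\sum_j\pi_{ji}\bar J_{j,i}^TP_j\bar J_{j,i}-P_i\prec0$ is \emph{not} a hypothesis of Proposition \ref{prop:2}: it is \eqref{dsjkldsaldjasdjal} evaluated at $\theta=0$, and $0\notin[T_{min},T_{max}]$ --- the whole point of the dwell-time condition is that contraction is only guaranteed after flowing for at least $T_{min}$. Second, the middle inequality $-P_i+e^{\bar A_i^T\theta}P_ie^{\bar A_i\theta}+\eps I\preceq0$ is not a ``tightening'' of \eqref{dsjkldsaldjasdjal}; it drops the jump maps entirely and demands that the flow alone contract in the $P_i$ metric. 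Taking determinants, it would force $(\det e^{\bar A_i\theta})^2\det P_i<\det P_i$, i.e. $\operatorname{tr}\bar A_i=\operatorname{tr}A_i<0$; this fails for every example in the paper (e.g. \eqref{eq:ex3a} has $\operatorname{tr}A_i=4$), even though statement (a) is feasible there, so the implication you assert is genuinely false. The repair is to make the witness carry the very structural difference you flagged: set
\begin{equation*}
S_i^\ast(\tau)=e^{\bar A_i^T\tau}\Bigl[\sum_{j=1}^N\pi_{ji}\bar J_{j,i}^TP_j\bar J_{j,i}\Bigr]e^{\bar A_i\tau}.
\end{equation*}
Then the first condition of (b) holds with equality, the third holds with equality because $S_i^\ast(0)=\sum_j\pi_{ji}\bar J_{j,i}^TP_j\bar J_{j,i}$, and the second becomes exactly \eqref{dsjkldsaldjasdjal} up to the $\eps I$ term, which is absorbed because \eqref{dsjkldsaldjasdjal} is strict on the compact set $[T_{min},T_{max}]$. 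In short, the witness's value at $\tau=0$ must match the inequality in which the jump maps appear, exactly as in Theorem \ref{th:main_IMP} where $S_i^\ast(0)=\sum_j\pi_{ji}P_j$ matched \eqref{eq:3}.
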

\begin{proof}
 % The equivalence between the statements (a) and (b) comes from the fact that the system \eqref{eq:dksldskdl1}-\eqref{eq:dksldskdl2} can be written as
%\begin{equation}
%  \begin{array}{rcl}
%    \dot{\bar{\chi}}(t)&=&\mathcal{A}\bar \chi(t),\ t\in \mathbb{R}_{\ge0}\backslash$\tkZ$\\
%    \bar \chi(t_k^+)&=&\mathcal{J}_{\sigma(t_k^+),\sigma(t_k)}    \bar \chi(t_k),\ k\in\mathbb{Z}_{\ge0}.
%  \end{array}
%\end{equation}
%  Using now the fact that $\mathcal{A}$ is block-diagonal, then we can nonconservatively pick $P\in\mathbb{S}^{N(n+m)}_{\succ0}$ as a block-diagonal matrix as well; i.e. $P=\diag_{i=1}^N\{P_i\}$ with $P_i\in\mathbb{S}^{n+m}_{\succ0}$. The result then follows from standard algebraic manipulations.
This result can be proven in the same way as Theorem \ref{th:main_IMP}. %Finally, the proof that (i) implies that the system \eqref{eq:mainsyst} with the rule \eqref{eq:rule2} is asymptotically follows from similar arguments as in \cite{Geromel:06d} with the slight change that switching now also depends on the current value of the switching signal.
\end{proof}

\blue{\subsection{Computational results}

\begin{mybox*}
\caption{SOS program associated with Theorem \ref{th:sw}, (c)}\label{box3}
{\vspace{1mm}}
\noindent\fbox{
\parbox{\textwidth}{
Find symmetric matrix-valued polynomial functions $\tilde{S}_i,\Gamma_i,\Delta_i:\mathbb{R}\mapsto\mathbb{S}^{n+m}$, $i=1,\ldots,N$, constant symmetric matrices $\tilde{P}_i$, $i=1,\ldots,N$, constant matrices $U_i\in\mathbb{R}^{m\times(n+m)}$, $i=1,\ldots,N$, a nonnegative matrix $\Pi\in\mathbb{R}^{N\times N}$ such that $\mathds{1}^T\Pi=\mathds{1}^T$ and a scalar $\eps>0$ such that
      \begin{itemize}
        \item $\Gamma_i(\cdot),\Delta_i(\cdot)$, $i=1,\ldots,N$, are SOS matrices,
        \item $P_i-\eps I$, $i=1,\ldots,N$, are SOS matrices,
        \item $-\dot{\tilde{S}}_i(\tau)-\He[\tilde{S}_i(\tau)\bar A_i^T] -\Gamma_i(\tau)g(\tau)$ is an SOS matrix for all $i=1,\ldots,N$,
        \item $-\tilde{P}_i+\tilde{S}_i(\theta)-\eps I -\Delta_i(\theta)h(\theta)$ is an SOS matrix for all $i=1,\ldots,N$,
        \item $\begin{bmatrix}
          \tilde{S}_i(0) & V_i^T\\
          V_i & \diag_{j=1}^N[\tilde{P}_j]
        \end{bmatrix}$ is an SOS matrix for all $i=1,\ldots,N$.
    \end{itemize}}}
\end{mybox*}

\begin{theorem}\label{th:NCS3}
   Let $0<T_{min}\le T_{max}<\infty$ be given. Then, the following statements are equivalent:
   \begin{enumerate}[(a)]
     \item The conditions of Theorem \ref{th:sw}, (a) hold.
     \item There exist some scalars $\eps>0$ and $d\in\mathbb{Z}_{\ge0}$ such that the SOS program in Box \ref{box3} is feasible using polynomial matrices of degree $2d$.
   \end{enumerate}
\end{theorem}
\begin{proof}
  The proof is identical to that of Theorem \ref{th:NCS1} and Theorem \ref{th:NCS2}.
\end{proof}}

\section{Examples}\label{sec:ex}

We provide several \blue{simple} illustrative examples. \blue{Although simple, some of these results are interesting from an educational viewpoint as the impact of the different parts of the system onto its stability is immediately clear from their formulation.} The infinite-dimensional conditions stated in the main results of the paper are checked using the package SOSTOOLS  \cite{sostools3} and the semidefinite programming solver SeDuMi \cite{Sturm:01a}. Examples of sum of squares programs can be found in \cite{Briat:13d,Briat:14f,Briat:15f}.

\subsection{Example 1. Sampled-data control or state jumps}

Let us consider the system \eqref{eq:mainsyst_imp} with the matrices
  \begin{equation}\label{eq:ex1a}
    A=\begin{bmatrix}
      3 & 0\\
      1 & 1
    \end{bmatrix},\ B=\begin{bmatrix}
      0\\1
    \end{bmatrix}
  \end{equation}
  and we define \bblue{two jump matrices}
  \begin{equation}\label{eq:ex1b}
\bar J_1=\begin{bmatrix}
    I_2 & \vline & 0\\
    \hline
    K^1 &\vline& K^2
    \end{bmatrix}\ \textnormal{and}\ \bar J_2=\begin{bmatrix}
      0.7 & 0 & \vline &0\\
      0 & 1.1 & \vline &0\\
      \hline
      0 & 0 &\vline & 1
    \end{bmatrix}.
  \end{equation}
  \blue{The rationale for considering this system lies in the fact that the pair $(A,B)$ is non-stabilizable as the first state is not affected by the sampled-data control input while the second one is. To overcome this situation, we assume the existence of a jump map that can have a stabilizing effect on the first state with the caveat that it slightly destabilizes the second one. Hence, stabilizing the system should be possible provided that one can find suitable controller gains $K^1$ and $K^2$ as well as a suitable jumping rule of the form \eqref{eq:rule1}. It is interesting to see that a game needs to be played between the two jump matrices as applying the first one stabilizes the second state but leaves the dynamics of the first one unchanged (i.e. exponentially increasing) whereas applying the second jump matrix will result in the decrease in magnitude of the first state but an increase of the first one.}

 Applying then the conditions of  Theorem \ref{th:main_IMP}(c) with $T_{min}=10$ms, $T_{max}=50$ms, $\pi_{12}=\pi_{21}=0.1$ with polynomials of order 2 in the associated sum of squares conditions yields the matrices of the min-jumping rule given by
 \begin{equation*}
   P_1=\begin{bmatrix}
      0.1184  &  0.0184   & 0.0023\\
    0.0184  &  0.5032   & 0.0183\\
    0.0023 &   0.0183    &0.0027
   \end{bmatrix},\
    P_2=\begin{bmatrix}
     0.0866   & 0.0877   & 0.0108\\
    0.0877  &  1.3107   & 0.1124\\
    0.0108  &  0.1124   & 0.0142
   \end{bmatrix}
  \end{equation*}
  and the following controller gain
 \begin{equation}
   \begin{bmatrix}
     K^1 & \vline & K^2
   \end{bmatrix}=\begin{bmatrix}
     -0.9622  & -7.7351  & \vline &  -0.0260
   \end{bmatrix}.
 \end{equation}
Simulation results are depicted in Fig.~\ref{fig:ex1} where we can see that the co-designed sampled-data jump rule and state-feedback control law are effectively able to stabilize the open-loop unstable system.
\begin{figure}
  \centering
  \includegraphics[width=0.8\textwidth]{./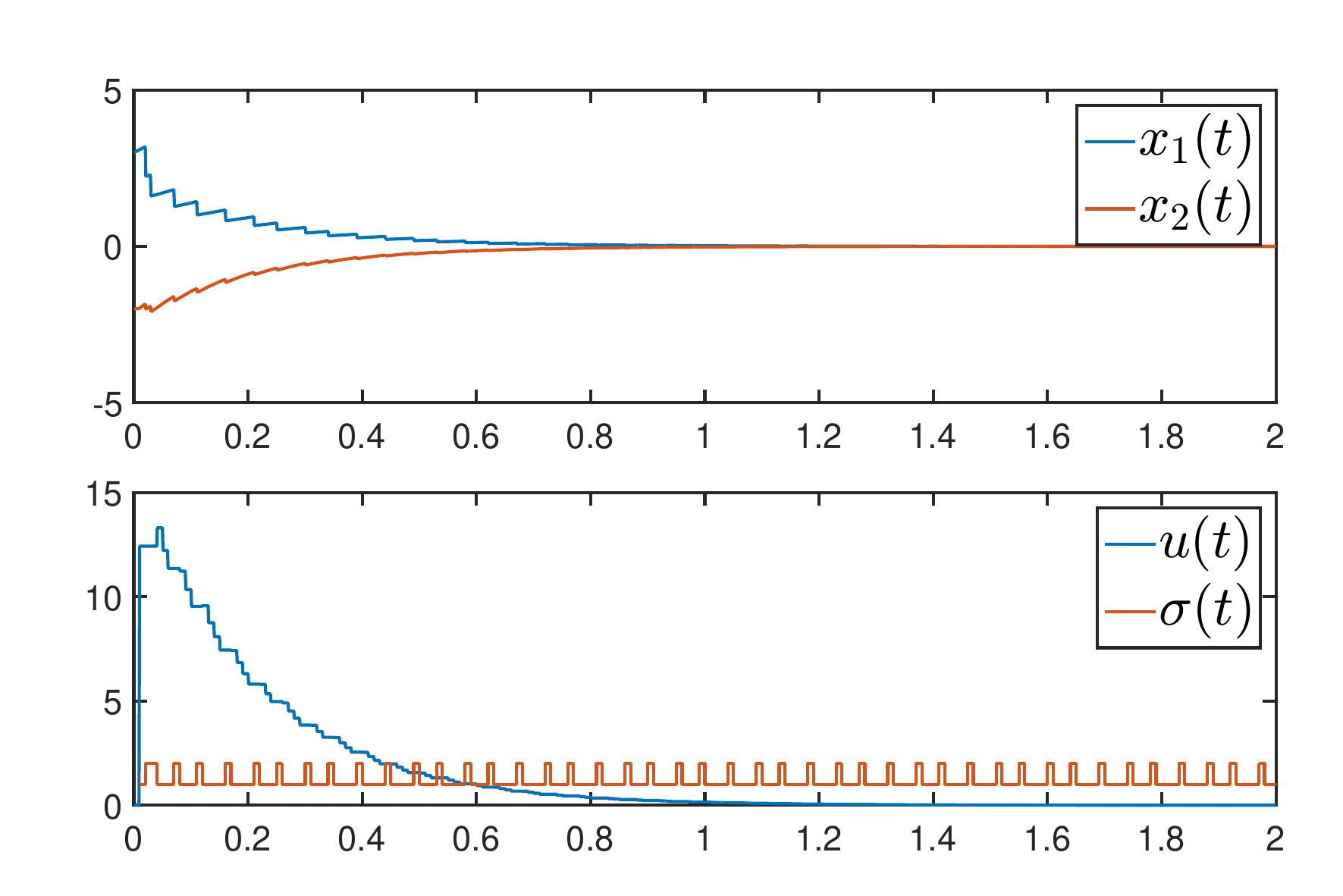}
  \caption{Simulation results for the system \eqref{eq:mainsyst_imp}, \eqref{eq:ex1a}, \eqref{eq:ex1b}.}\label{fig:ex1}
\end{figure}

\subsection{Example 2. Game vs. two jump matrices}

We consider here the system \eqref{eq:mainsyst_imp} with the matrices
\begin{equation}\label{eq:ex2a}
A=\begin{bmatrix}
2 & 3\\1 & 1
\end{bmatrix},\ J_1=\begin{bmatrix}
  1 & 0\\
  0 & 0.8
\end{bmatrix}\ \textnormal{and}\ J_2=\begin{bmatrix}
  0.7 & 0\\
  0 & 1
\end{bmatrix}
\end{equation}
where we can see that, as before, a game needs to be played between the two jump matrices. Applying the conditions of Theorem \ref{th:main_IMP}(c) (adapted to the case where no state-feedback controller has to be designed) with $\pi_{11}=\pi_{22}=0.9$, $T_{min}=T_{max}=0.02$ yields the matrices
\begin{equation}
  P_1=\begin{bmatrix}
    25.5386 &   6.3780\\
    6.3780   & 6.6746
  \end{bmatrix}\ \textnormal{and}\ P_2=\begin{bmatrix}
  2.8886&    2.8549\\
    2.8549  & 20.6927
  \end{bmatrix}.
\end{equation}
The simulation results in Fig.~\ref{fig:ex2} illustrate the ability of the min-switching rule to efficiently stabilize the system.
\begin{figure}
  \centering
  \includegraphics[width=0.8\textwidth]{./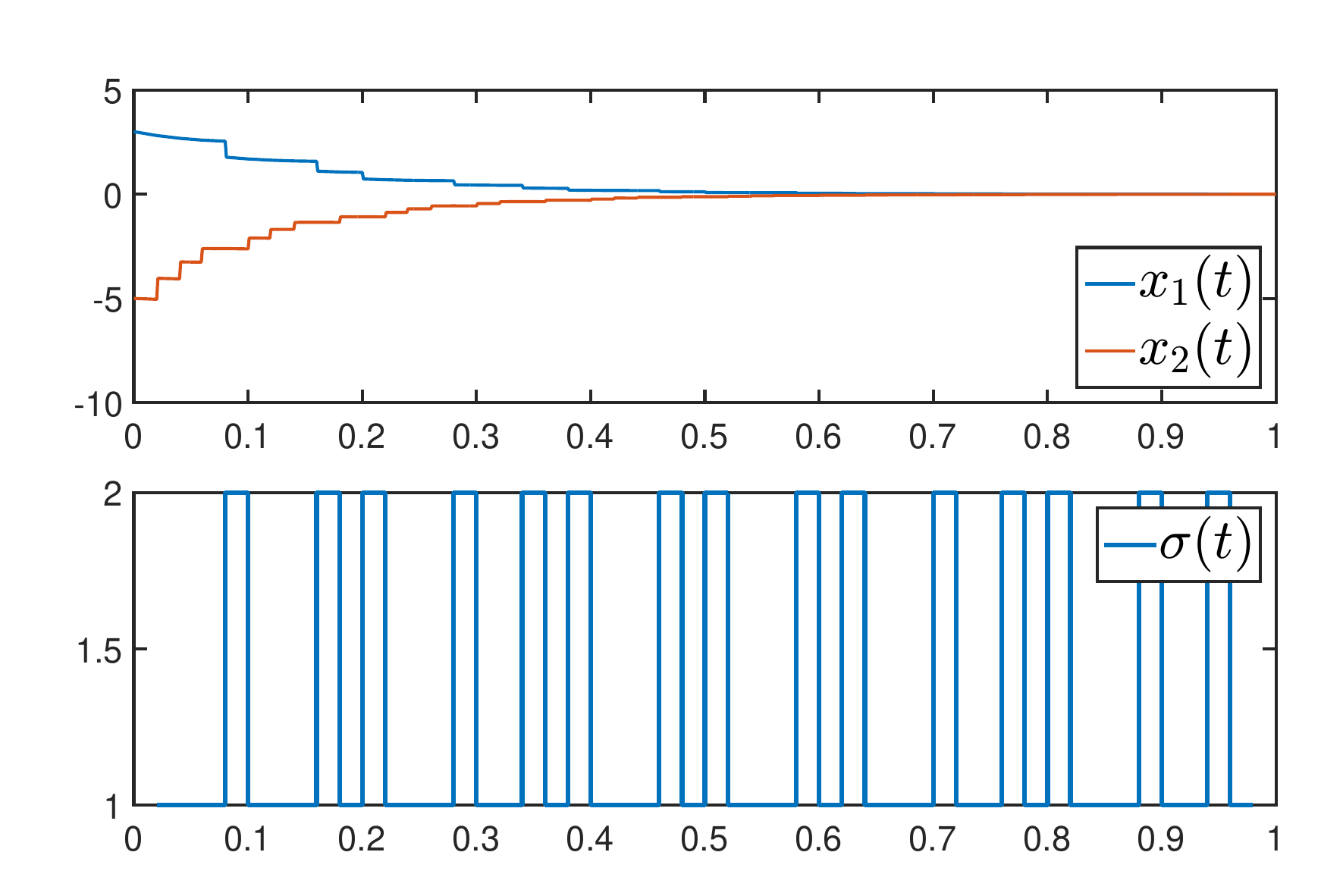}
  \caption{Simulation results for the system \eqref{eq:mainsyst_imp}, \eqref{eq:ex2a}.}\label{fig:ex2}
\end{figure}

\subsection{Example 3. Actuator switching}

Let us consider the system \eqref{eq:mainsyst} with the matrices
\begin{equation}\label{eq:ex3a}
  A_1=A_2=\begin{bmatrix}
  3 & 0\\1 & 1
  \end{bmatrix},\ B_1=\begin{bmatrix}
    6 & 0\\
    0 & 0
  \end{bmatrix}\ \textnormal{and}\ B_2=\begin{bmatrix}
 0 & 0\\
 0 & 4
  \end{bmatrix}.
\end{equation}
This system represents a system for which only one actuator can be updated at a time while the other maintains its previous control input. The first one can act on both states (i.e. the pair $(A,B_1)$ is controllable) whereas the second one can only act on the second state. Applying the conditions of Theorem \ref{th:sw}(c) with $\pi_{11}=\pi_{22}=0.1$, $T_{min}=10$ms, $T_{max}=50$ms, with polynomials of order 2 in the associated sum of squares conditions yields the matrices
\begin{equation*}
 P_1=10\begin{bmatrix}
  0.63 &  -0.06 &  -2.14 &  -0.11\\
   -0.06  &  3.51 &   0.03 &  -7.64\\
   -2.14  &  0.03  &  7.45 &   0.59\\
   -0.11&   -7.64  &  0.59  &  203.99
  \end{bmatrix}
\end{equation*}
\begin{equation*}
  P_2=10\begin{bmatrix}
     0.43 &  -0.05 &  -1.53&   -0.08\\
   -0.05  &  4.03  &  0.05&   -8.53\\
   -1.53  &  0.05 &   191.30  &  0.61\\
   -0.08  & -8.53  &  0.61&    23.52
\end{bmatrix}
\end{equation*}
together with
\begin{equation*}
  \begin{array}{lcl}
    K_{1,1}&=&\begin{bmatrix}
      -3.4332&   -0.0457  & -0.0061 &  -0.0003
    \end{bmatrix},\vspace{1mm}\\
    K_{1,2}&=&\begin{bmatrix}
   -3.4160 &  -0.0516 &  -0.0001 &  -0.0033
    \end{bmatrix},\vspace{1mm}\\
    K_{2,1}&=&\begin{bmatrix}
  -0.4073 &  -2.1272  &  0.0076&   -0.0004
    \end{bmatrix},\vspace{1mm}\\
    K_{2,2}&=&\begin{bmatrix}
     -0.4323  & -2.1198 &   0.0014   & 0.0043
    \end{bmatrix}.
  \end{array}
\end{equation*}
The simulation results depicted in Fig.~\ref{fig:ex3} demonstrate the stabilization effect of the proposed co-design approach.
\begin{figure}
  \centering
  \includegraphics[width=0.8\textwidth]{./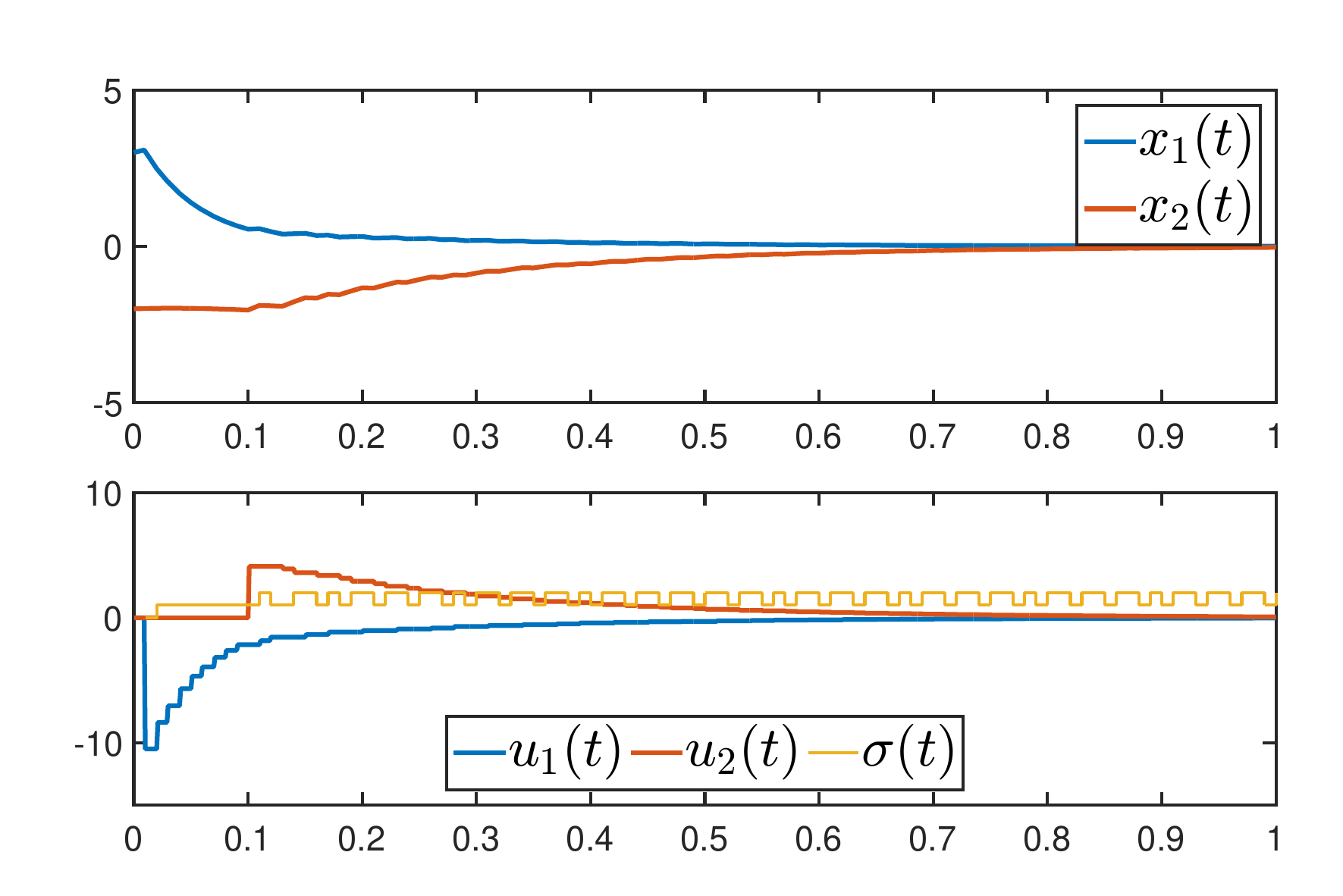}
  \caption{Simulation results for the system \eqref{eq:mainsyst}, \eqref{eq:ex3a}.}\label{fig:ex3}
\end{figure}

\blue{\subsection{Example 5. Co-design for sampled-data systems}

Let us consider here the continuous-time system
\begin{equation}\label{eq:dsdsdsd}
  \dot{x}=\begin{bmatrix}
    0 & 0\\1 & 0
  \end{bmatrix}x+\begin{bmatrix}
    0\\
    10
  \end{bmatrix}u
\end{equation}
for which we aim to design a sampled-data state-feedback control law together with a decision rule on whether to update the value of the control input or keep the previous one when a sampled measurement arrives. This can be modeled using the augmented system
\begin{equation}
  \dot{z}=\begin{bmatrix}
    0 & 0 & 0\\1 & 0 &0\\0 & 10 & 0
  \end{bmatrix}z
\end{equation}
where $z=\col(x,u)$ which is subject to the jump matrices $J_1=I_3$ and
\begin{equation}
  J_2=\begin{bmatrix}
    I_2 & 0\\
    K_1 & K_2
  \end{bmatrix}
\end{equation}
where the controller matrices $K_1,K_2$ need to be designed. Choosing now $\Pi=\begin{bmatrix}
  0.2 & 0.5\\
  0.8 & 0.5
\end{bmatrix}$ and $T_{min}=0.01$, polynomials of degree 4, we can show that  we can stabilize such a system with aperiodic measurements such that $T_k\in[0.01,0.13]$ with the controller matrices
\begin{equation}
  K_1=\begin{bmatrix}
     -0.0675  & -0.0501
  \end{bmatrix}\ \textnormal{and }K_2=-0.2021.
\end{equation}
The SOS program has 1803 primal and 480 dual variables. It takes less than 3 seconds to be solved. We also get the matrices
\begin{equation}
  P_1=\begin{bmatrix}
      0.7947 &   -0.3382  & -0.0253\\
   -0.3382&    1.7296  & -0.0530\\
   -0.0253  & -0.0530&    1.0493
  \end{bmatrix},\ P_2=\begin{bmatrix}
     0.7968 &  -0.3388&   -0.0322\\
   -0.3388  &  1.7313&   -0.0527\\
   -0.0322 &  -0.0527   & 0.0135
  \end{bmatrix}
\end{equation}

\begin{figure}
  \centering
  \includegraphics[width=0.8\textwidth]{./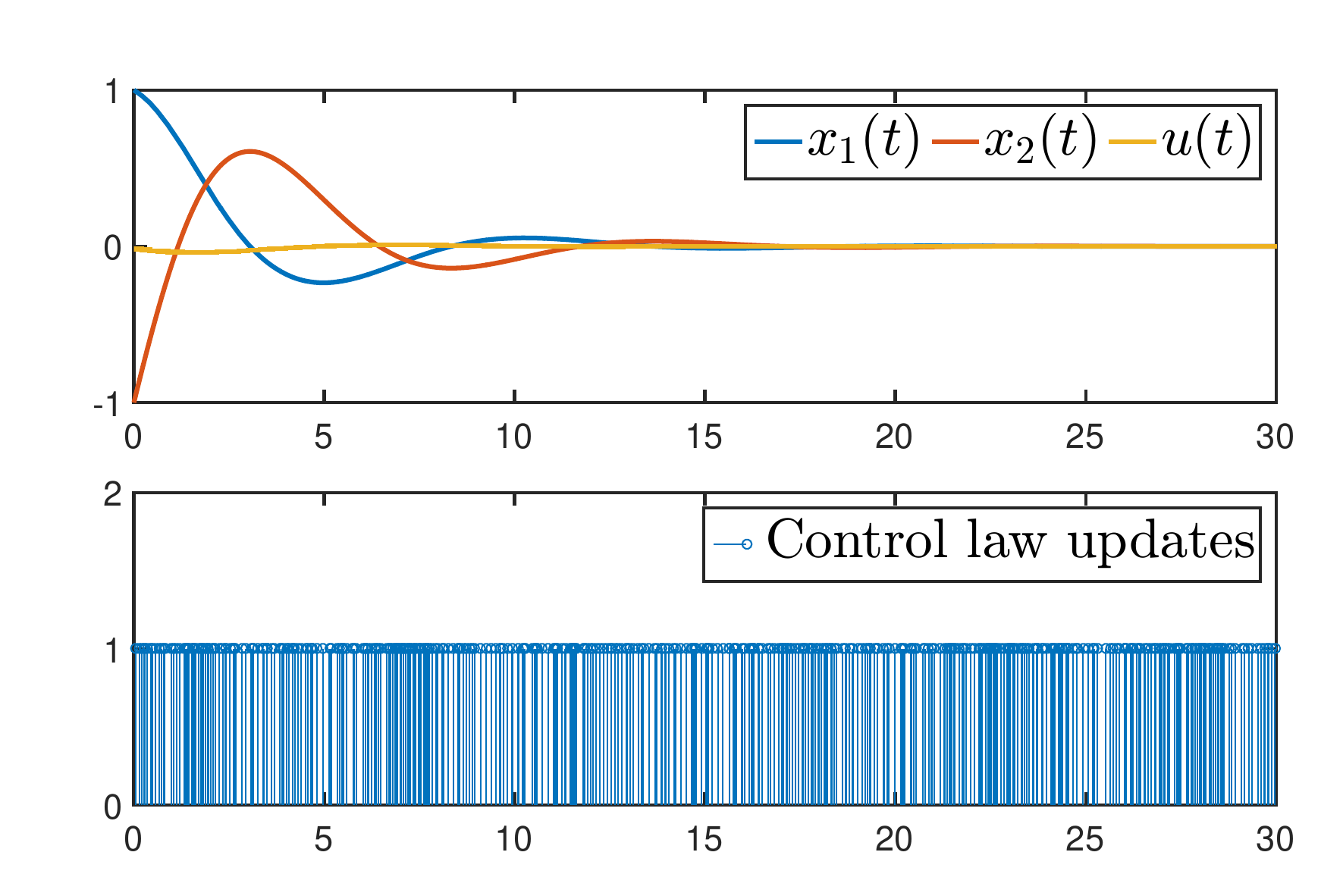}
  \caption{Simulation results for the system \eqref{eq:mainsyst}, \eqref{eq:dsdsdsd}.}
\end{figure}

}

\blue{\subsection{Example 5. Co-design for switched systems}

Let us start with a switched-system of the form \eqref{eq:mainsyst} with the matrices
\begin{equation}\label{eq:ex4a}
  A_1=\begin{bmatrix}
    1 & 1 &  0\\
    1  & 2 &  0\\
     1 & 2   &  2
  \end{bmatrix},B_1=\begin{bmatrix}
   0\\
    0\\
     10
  \end{bmatrix},A_2=\begin{bmatrix}
     1&    2   &  2\\
     0& 2  &   1\\
     0&    1   &  2
  \end{bmatrix}, B_2=\begin{bmatrix}
   10\\
     0\\
    0
  \end{bmatrix}
  \end{equation}
and
\begin{equation}
  A_3=\begin{bmatrix}
     1&  0   & 2\\
     1&    2&    3\\
     1   & 0 &    2
  \end{bmatrix}, B_3=\begin{bmatrix}
0\\
10\\
0
  \end{bmatrix}.
  \end{equation}
The spectrum of all the above matrices is located in the right-half plane. Moreover, the controllability matrix of each subsystem has rank one, which means that only one state can be controlled at a time, namely, state 3,1 and 2 for the subsystem 1,2 and 3, respectively. The goal is then to find a sampled-data control law that allows for the stabilization of the controllable state and a switching law allowing to stabilize the overall system. \bblue{We hence, define the augmented system
\begin{equation}
  \dot{z}=\begin{bmatrix}
    A_\sigma & B_\sigma\\
    0 & 0
  \end{bmatrix}z
\end{equation}
where $z=\col(x,u)$.} We also select the following parameters $d=2$, $T_{min}=0.02$, $T_{max}=0.05$  and $$\Pi=\begin{bmatrix}
  0.3530  &  0.2659  &  0.1112\\
    0.1962&    0.1216  &  0.4766\\
    0.4508   & 0.6126   & 0.4122
\end{bmatrix}$$

The SDP problem has 2200 primal and 738 dual variables and is solved in less than 2 seconds. The following Lyapunov matrices are obtained:
\begin{equation}
  P_1=\begin{bmatrix}
165.6800   & 38.2540  & -38.7790  & -13.5960\\
   38.2540  & 201.3500  & -25.3810  & -40.0490\\
  -38.7790  & -25.3810   & 37.4760  & -52.1520\\
  -13.5960 &  -40.0490  &  -52.1520  & 140.7800
  \end{bmatrix},
\end{equation}

\begin{equation}
  P_2=\begin{bmatrix}
288.9500   & 87.3970  &  -40.7430 & -352.2100\\
   87.3970 &  227.9900 &  -36.3720  & -99.6230\\
  -40.7430 &  -36.3720 &   20.6920  &   7.6852\\
 -352.2100 &  -99.6230  &   7.6852 &  602.5600
  \end{bmatrix},
\end{equation}

\begin{equation}
  P_3=\begin{bmatrix}
200.3700  & 91.3040  &-46.6470  &-62.3100\\
   91.3040 & 372.8700  &-34.7260 &-408.0100\\
  -46.6470 & -34.7260  & 20.7850   & 11.0040\\
  -62.3100 &-408.0100 &  11.0040  & 529.1500\\
  \end{bmatrix}
\end{equation}
as well as the following state-feedback gains
\begin{equation}
  \begin{array}{rcl}
    K_{1,1}&=&\begin{bmatrix}
      -0.5011 &  -0.3758 &  -2.1639 &   0.0015
    \end{bmatrix},\\K_{1,2}&=&\begin{bmatrix}
       -0.4855   &-0.3760   &-2.1325  &  0.0073
    \end{bmatrix},\\
    K_{1,3}&=&\begin{bmatrix}
       -0.4921&   -0.3463 &  -2.1051   & 0.0203
    \end{bmatrix},\\K_{2,1}&=&\begin{bmatrix}
      -1.5984 &  -0.3747&   -3.4286  & -0.0003
    \end{bmatrix},\\
    K_{2,2}&=&\begin{bmatrix}
      -1.6345&   -0.3783&   -3.5077  & -0.0163
    \end{bmatrix},\\K_{2,3}&=&\begin{bmatrix}
      -1.5837&   -0.3332&   -3.3568&    0.0272
    \end{bmatrix},\\
    K_{3,1}&=&\begin{bmatrix}
      -0.2200&   -1.2307&   -2.0104   & 0.0026
    \end{bmatrix},\\K_{3,2}&=&\begin{bmatrix}
      -0.2145 &  -1.2328 &  -2.0120 &   0.0030
    \end{bmatrix},\\
    K_{3,3}&=&\begin{bmatrix}
      -0.2212  & -1.2371 &  -2.0284 &  -0.0038
    \end{bmatrix}.
  \end{array}
\end{equation}

The simulation results depicted in Fig.~\ref{fig:ex4} demonstrate the stabilization effect of the proposed co-design approach.
\begin{figure}
  \centering
  \includegraphics[width=0.8\textwidth]{./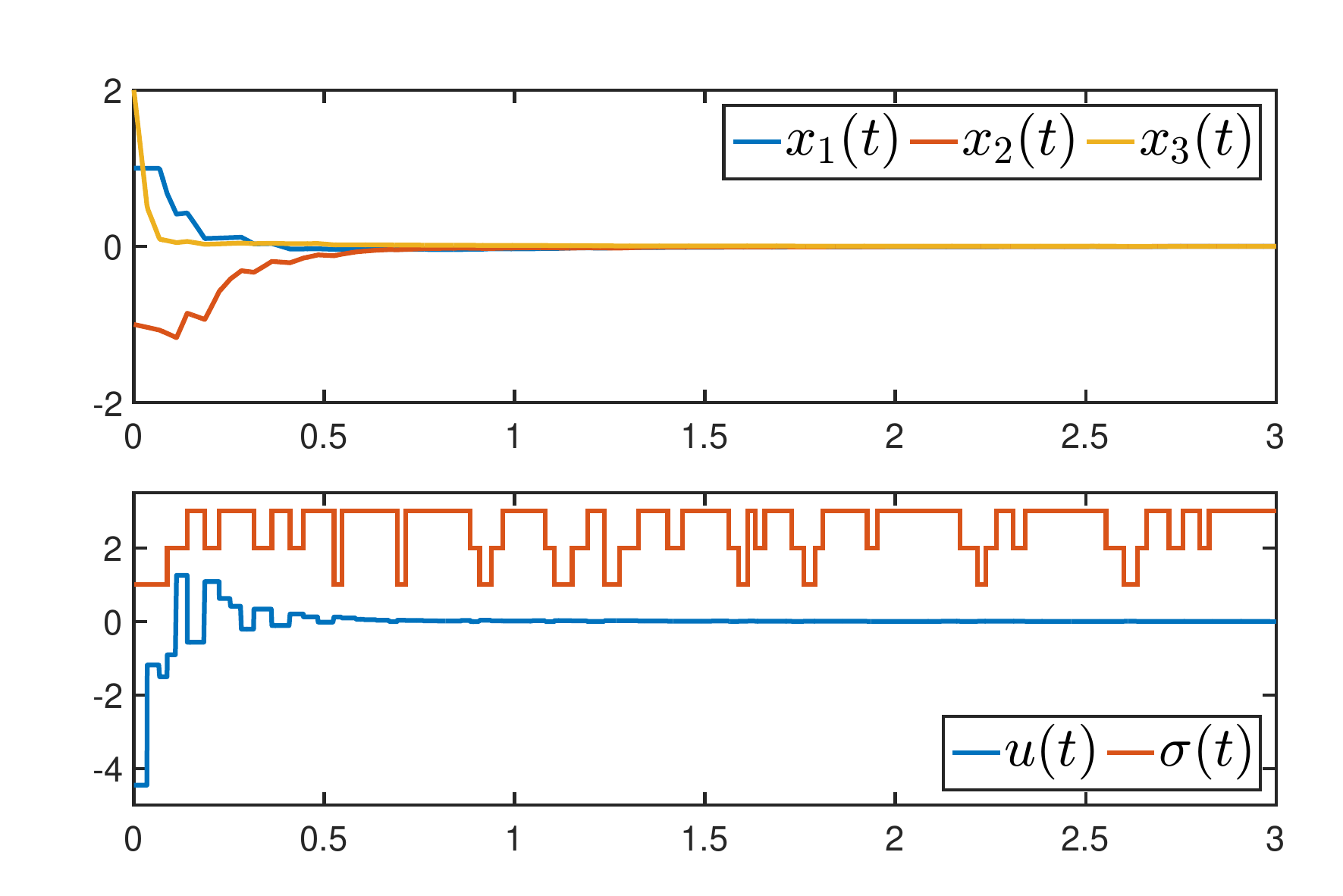}
  \caption{Simulation results for the system \eqref{eq:mainsyst}, \eqref{eq:ex4a}.}\label{fig:ex4}
\end{figure}}

\blue{\section{Conclusion}

A co-design approach for the simultaneous design of a sampled-data control law and a min-jumping rule for jump systems subject to sampled measurements has been provided. The approach naturally extends to address the co-design problem for switched systems subject to sampled measurements. The stability and stabilization conditions are expressed as infinite-dimensional Lyapunov-Metzler conditions that can reduce to infinite-dimensional semidefinite programs when the Lyapunov-Metzler variable is set to a specific value. As the conditions are infinite-dimensional, they cannot be directly checked. Relaxed results based on sum of squares (SOS) are provided and result in finite-dimensional semidefinite programs when the degree of the polynomials are fixed. Converse results are then given to demonstrate that SOS relaxations are, in fact, non conservative if we allow the degree of the polynomials to be arbitrarily large. Numerical results tend to suggest that satisfying results are already obtained for low polynomial degrees. Examples are given for illustrations. Potential extensions include the consideration of system uncertainties and the inclusion of performance measures such that the $L_2$-performance measure. These extensions are, in fact, rather straightforward due to the affine dependence of the conditions in terms of the matrices of the system.}

%{
%\bibliographystyle{IEEEtran}
%\bibliography{../../../../Lastbib/global,../../../../Lastbib/briat}
%}

% Generated by IEEEtran.bst, version: 1.14 (2015/08/26)

\end{document}